%%%%%%%%%%%%%%%%%%%%%% file template.tex %%%%%%%%%%%%%%%%%%%%%%%%%
%
% This is a general template file for the LaTeX package SVJour3
% for Springer journals.          Springer Heidelberg 2006/03/15
%
% Copy it to a new file with a new name and use it as the basis
% for your article. Delete % signs as needed.
%
% This template includes a few options for different layouts and
% content for various journals. Please consult a previous issue of
% your journal as needed.
%
%%%%%%%%%%%%%%%%%%%%%%%%%%%%%%%%%%%%%%%%%%%%%%%%%%%%%%%%%%%%%%%%%%%
%
% First comes an example EPS file -- just ignore it and
% proceed on the \documentclass line
% your LaTeX will extract the file if required
% [arxiv_v2: filecontents example.eps stripped, 188 chars]

\documentclass[smallcondensed]{svjour3}

                 % onecolumn (standard format)
%\documentclass[smallextended]{svjour3}     % onecolumn (second format)
%\documentclass[twocolumn]{svjour3}         % twocolumn
%
\smartqed  % flush right qed marks, e.g. at end of proof
%
%\usepackage{graphicx}
%
% \usepackage{mathptmx}      % use Times fonts if available on your TeX system
% \usepackage{chicago-bibstyle}  % use this style if you don't use BibTeX.
%
% insert here the call for the packages your document requires
%\usepackage{latexsym}
% etc.
%
% please place your own definitions here and don't use \def but
% \newcommand{}{}
%
% Insert the name of "your journal" with
% \journalname{myjournal}
%
% Definitions for the journal names

\usepackage{amsmath,amssymb,amsfonts}

\begin{document}

\title{Full description of the eigenvalue set of the $(p,q)$-Laplacian with a Steklov-like boundary condition
}
%\subtitle{Do you have a subtitle?\\ If so, write it here}

\titlerunning{Full description of the eigenvalue set of the $(p,q)$-Laplacian}
% if too long for running head

\author{Lumini\c{t}a Barbu   \and
        Gheorghe Moro\c{s}anu
}

%\authorrunning{Short form of author list} % if too long for running head

\institute{Lumini\c{t}a Barbu \at
               Faculty of Mathematics and Computer Science \\
               Ovidius University\\
                124 Mamaia Blvd, 900527 Constan\c{t}a, Romania \\
                           \email{lbarbu@univ-ovidius.ro}           %  \\
%             \emph{Present address:} of F. Author  %  if needed
           \and
          Gheorghe Moro\c{s}anu \at
      Academy of Romanian Scientists, Bucharest, Romania
             \\
             and \\
              Faculty of Mathematics and Computer Science\\
              Babe\c{s}-Bolyai University\\
              1 M. Kog\u{a}lniceanu Str., 400084 Cluj-Napoca, Romania\\
               \email{morosanu@math.ubbcluj.ro}
               }

\date{Received: date / Accepted: date}
% The correct dates will be entered by the editor

\maketitle

\begin{abstract}
In this paper we consider in a bounded domain $\Omega \subset \mathbb{R}^N$ with smooth boundary an eigenvalue problem for the negative $(p,q)$-Laplacian
with a Steklov-like boundary condition, where $p,\, q\in (1,\infty)$, $p\neq q$, including the open case $p\in (1,\infty)$, $q\in (1, 2)$, $p\neq q$. A full description of the set of eigenvalues of this problem is provided. Our results complement those previously obtained by Abreu and Madeira \cite{AM}, Barbu and Moro\c{s}anu \cite{BM}, F\u{a}rc\u{a}\c{s}eanu, Mih\u{a}ilescu and Stancu-Dumitru \cite{FMS}, Mih\u{a}ilescu \cite{MMih}, Mih\u{a}ilescu and Moro\c{s}anu \cite{MM}.
\keywords{Eigenvalues\and $(p,q)$-Laplacian\and Sobolev space \and Nehari manifold \and variational methods. }
% \PACS{PACS code1 \and PACS code2 \and more}
\subclass{35J60 \and 35J92 \and 35P30}
\end{abstract}

\section{ Introduction}
Let $\Omega \subset \mathbb{R}^N$ be a bounded domain with smooth  boundary $\partial\Omega$. Consider in $\Omega$ the eigenvalue problem
\begin{equation}\label{eq:1.1}
\left\{\begin{array}{l}
\mathcal{A} u:=-\Delta_p u-\Delta_q u=\lambda a(x) \mid u\mid ^{q-2}u\ \ \mbox{ in} ~ \Omega,\\[1mm]
\frac{\partial u}{\partial\nu_\mathcal{A}}=\lambda b(x) \mid u\mid ^{q-2}u ~ \mbox{ on} ~ \partial \Omega,
\end{array}\right.
%\tag{1.1}
\end{equation}
under the following hypotheses

$(h_{pq}) \ \ \ \ \ \ p,~q\in (1, \infty),~ p\neq q$;\\

$(h_{ab}) \ \ \ \ \ \ a\in L^{\infty}(\Omega)$ and $b\in L^{\infty}(\partial \Omega)$ are given nonnegative functions satisfying
\begin{equation}\label{eq:1.2}
 \int_\Omega a(x)~dx+\int_{\partial\Omega} b(\sigma)~d\sigma >0.
% \tag{1.2}
\end{equation}
We have used above the notation
$$
\frac{\partial u}{\partial\nu_\mathcal{A}}:=\big(\mid \nabla u\mid ^{p-2}+\mid \nabla u\mid ^{q-2}\big)\frac{\partial u}{\partial\nu},
$$
where $\nu$ is the unit outward normal to $\partial\Omega$. As usual, $\Delta_p$ denotes the $p$-Laplacian, i.e.,
$\Delta_pu=\, \mbox{div} \, (|\nabla u|^{p-2}\nabla u)$.

The operator $\big(\Delta_p + \Delta_q\big)$,
called $(p,q)$-Laplacian, occurs in many applications
%For instance, this happens when one seeks stationary solutions from a general
%reaction diffusion system
%\[
%u_t =\mbox{div}~\big(\Delta_p + \Delta_q\big)(u)+c(x, u),
%\]
%which exhibits a wide range of applications
in physics and related sciences such as biophysics (see \cite{F}, \cite{Mu}), quantum and
plasma physics (see \cite{A}, \cite{W}), solid state physics (\cite{My}),  chemical reaction design (see \cite{Ar}), etc.
%In such applications, the function $u$  describes a concentration,
%the  first term in the right hand side corresponds to the diffusion with a (generally non-constant)
%diffusion coefficient $\big(\Delta_p + \Delta_q\big)(u),$ whereas $c(x, u)$ is the reaction and relates to source and
%loss processes. Typically, in chemical and biological applications, the reaction term
%$c(x, u)$ has a polynomial form with respect to the concentration $u.$

\bigskip

The solution $u$ of \eqref{eq:1.1} is understood in a weak sense, as an element of the Sobolev space $W:=W^{1,\max\{p,q\}}(\Omega)$ satisfying
equation $\eqref{eq:1.1}_1$ in the sense of distributions and $\eqref{eq:1.1}_2$ in the sense of traces.
\begin{definition}\label{def1}
$\lambda\in \mathbb{R}$ is an eigenvalue of problem \eqref{eq:1.1} if there exists $u_\lambda\in W \setminus \{0\}$ such that
\begin{equation}\label{eq:1.3}
\begin{split}
\int_\Omega &\Big(\mid \nabla u_\lambda\mid ^{p-2}+\mid \nabla u_\lambda\mid ^{q-2}\Big)\nabla u_\lambda \cdot \nabla w~dx \\
&=\lambda\Big(\int_\Omega a\mid  u_\lambda\mid ^{q-2} u_\lambda  w~dx+\int_{\partial\Omega} b \mid  u_\lambda\mid ^{q-2} u_\lambda  w~d\sigma\Big)~\forall~w\in W.
\end{split}
%\tag{1.3}
\end{equation}
\end{definition}
According to a Green type formula (see \cite{CF}, p. 71), $u\in W$ is a solution of \eqref{eq:1.1} if and only if it satisfies \eqref{eq:1.3}.

 Choosing $w=u_\lambda$ in \eqref{eq:1.3} shows that the eigenvalues of problem \eqref{eq:1.1} cannot be negative.
It is also obvious that $\lambda_0=0$ is an eigenvalue of this problem and the corresponding eigenfunctions
are the nonzero constant functions. So any other eigenvalue belongs to $(0,\infty)$.

If we assume that $\lambda>0$ is an eigenvalue of problem \eqref{eq:1.1} and choose $w\equiv 1$ in \eqref{eq:1.3} we deduce that every eigenfunction
$u_{\lambda}$ corresponding to $\lambda$ satisfies the equation
\begin{equation}\label{eq:1.4}
\int_\Omega a\mid  u_\lambda\mid ^{q-2} u_\lambda  ~dx+\int_{\partial\Omega} b \mid  u_\lambda\mid ^{q-2} u_\lambda  ~d\sigma=0.
%\tag{2.1}
\end{equation}
So all eigenfunctions corresponding to positive eigenvalues necessarily belong to the set
\begin{equation}\label{eq:1.5}
\mathcal{C}:=\Big\{ u\in W;~\int_\Omega a\mid  u\mid ^{q-2} u  ~dx+\int_{\partial\Omega} b \mid  u\mid ^{q-2} u  ~d\sigma=0\Big\}.
\end{equation}
This set is a symmetric cone and for $q=2$ it is a linear subspace of $W$.

In the particular case $q=2,~a\equiv 1,~b\equiv 0$, the set of eigenvalues for problem \eqref{eq:1.1} was completely described by
M. Mih\u{a}ilescu \cite{MMih} (for $p>2)$ and  M. F\u{a}rc\u{a}\c{s}eanu, M. Mih\u{a}ilescu and D. Stancu-Dumitru \cite{FMS} (for $p\in (1, 2)$). Problem \eqref{eq:1.1}  with $q=2, ~p\in (1,\infty)\setminus\{2\}$ has been studied by J. Abreu and G. Madeira \cite{AM}. We point out that in the case $q=2$ the techniques employed in the papers just mentioned are not applicable to the case of the $(p, q)$-Laplacian with $q\neq 2 $  since in this situation  $\mathcal{C}$  is no longer a linear subspace of $W$.
Note that  problem \eqref{eq:1.1} with $p\in (1, \infty), \ q\in (2, \infty), \ p\neq q, \ a\equiv 1, \ b\equiv 0$ has been investigated  by M. Mih\u{a}ilescu and G. Moro\c{s}anu in \cite{MM}; also,  problem \eqref{eq:1.1} with $p\in (1,\infty)$, $q\in (2,\infty)$ \ $p\neq q$ has been solved by L. Barbu and G. Moro\c{s}anu \cite{BM}. The strategy employed in these two papers, based on the Lagrange Multipliers Rule, cannot be applied to the case $p\in (1,\infty )$,  $q\in (1,2)$, $p\neq q$, since the constraint set $\mathcal{C}$ defined in \eqref{eq:1.5} is no longer a $C^1$ manifold. This case requires separate analysis and some difficulties that occur within the new framework have to be overcome. We shall make use of the so-called direct methods in the Calculus of Variations. In fact, the arguments we shall use work for all $q\in (1, \infty),$ not just for $q\in (1,2).$

\bigskip

Specifically, our goal here is to determine the set of all eigenvalues of problem \eqref{eq:1.1} under $(h_{pq})$ and $(h_{ab})$. As we have already mentioned, in  \cite[Theorem 1.1]{AM} and \cite[Theorem 3.1]{BM} it was proved that in the cases $p\in (1,\infty)\setminus \{ 2\}$, $q=2$ and $p\in (1,\infty)$, $q\in (2, \infty)$, $p\neq q$, respectively, the set of eigenvalues of problem \eqref{eq:1.1} is given by $0\cup (\lambda_1, \infty),$ where $\lambda_1$ is given by
\begin{equation}\label{eq:1.6}
\lambda_1:=\underset{w\in\mathcal{C}\setminus\{0\}}{\inf }~\frac{\int_\Omega\mid\nabla w\mid^q~dx}{\int_{\Omega}a\mid w\mid^q~dx+
\int_{\partial\Omega}b\mid  w\mid^q~d\sigma}.
%\tag{2.3}
\end{equation}
Note that the denominators of the above fractions may equal zero for some $w$'s in $\mathcal{C}\setminus \{ 0\}$ and in such cases the corresponding
numerators are obviously $> 0,$ thus the values of those fractions are considered $\infty$ so they do not contribute to $\lambda_1$.

\bigskip

Let us now state the main result of this paper (which covers the open case $p\in (1,\infty), \ q\in (1,2), \ p\neq q$).
\begin{theorem}\label{teorema1}
Assume that $(h_{pq})$ and $(h_{ab})$ above are fulfilled. Then the set of eigenvalues of problem \eqref{eq:1.1} is precisely $\{0\} \cup (\lambda_1, \infty)$, where $\lambda_1$ is the positive constant defined by \eqref{eq:1.6}.
\end{theorem}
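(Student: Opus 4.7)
The plan is to split Theorem~\ref{teorema1} into a \emph{necessary} direction (positive eigenvalues must lie in $(\lambda_1,\infty)$) and a \emph{sufficient} direction (every $\lambda>\lambda_1$ is an eigenvalue). Since $0$ is already known to be an eigenvalue, this is enough.

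For necessity I would first verify $\lambda_1>0$: a minimizing sequence $w_n\in\mathcal{C}\setminus\{0\}$ normalized by $\int_\Omega a|w_n|^q+\int_{\partial\Omega}b|w_n|^q=1$ is bounded in $W^{1,q}(\Omega)$, and if $\lambda_1=0$ then Rellich--Kondrachov together with the compactness of the trace $W^{1,q}(\Omega)\hookrightarrow L^q(\partial\Omega)$ produce a subsequence converging strongly to a constant lying in $\mathcal{C}$, hence zero by $(h_{ab})$, contradicting the normalization. Next, if $\lambda>0$ is an eigenvalue with eigenfunction $u_\lambda$, then $u_\lambda\in\mathcal{C}$ by \eqref{eq:1.4} and $u_\lambda$ is non-constant (constants are eigenfunctions only for $\lambda=0$). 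Choosing $w=u_\lambda$ in \eqref{eq:1.3} and splitting the numerator,
\[
\lambda=\frac{\int_\Omega|\nabla u_\lambda|^p+\int_\Omega|\nabla u_\lambda|^q}{\int_\Omega a|u_\lambda|^q+\int_{\partial\Omega}b|u_\lambda|^q} \geq \lambda_1+\frac{\int_\Omega|\nabla u_\lambda|^p}{\int_\Omega a|u_\lambda|^q+\int_{\partial\Omega}b|u_\lambda|^q} > \lambda_1,
\]
the strict inequality because $\int_\Omega|\nabla u_\lambda|^p>0$.

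For sufficiency, fix $\lambda>\lambda_1$ and seek a non-zero critical point of the $C^1$ energy
\[
J_\lambda(u)=\tfrac{1}{p}\int_\Omega|\nabla u|^p+\tfrac{1}{q}\int_\Omega|\nabla u|^q-\tfrac{\lambda}{q}\Big(\int_\Omega a|u|^q+\int_{\partial\Omega}b|u|^q\Big),
\]
whose Euler--Lagrange equation is precisely \eqref{eq:1.3}; any such critical point automatically lies in $\mathcal{C}$ (test with $w\equiv 1$) and cannot be constant, so it is an eigenfunction. The natural candidate set is the Nehari-type manifold $\mathcal{N}_\lambda=\{u\in W\setminus\{0\}:\langle J_\lambda'(u),u\rangle=0\}$. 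A fibering analysis of $t\mapsto J_\lambda(tw)$ applied to any $w\in\mathcal{C}\setminus\{0\}$ with $\int_\Omega|\nabla w|^q<\lambda(\int_\Omega a|w|^q+\int_{\partial\Omega}b|w|^q)$ (such $w$ exist since $\lambda>\lambda_1$) gives a unique $t(w)>0$ with $t(w)w\in\mathcal{N}_\lambda$, so $\mathcal{N}_\lambda\neq\emptyset$. On $\mathcal{N}_\lambda$ one has the identity $J_\lambda(u)=(\tfrac1p-\tfrac1q)\int_\Omega|\nabla u|^p$, and a direct computation shows that the derivative of the defining constraint paired with the radial direction equals $(p-q)\int_\Omega|\nabla u|^p\neq 0$ (elements of $\mathcal{N}_\lambda$ are non-constant), so $\mathcal{N}_\lambda$ is a natural $C^1$ constraint --- crucially, this remains valid when $q\in(1,2)$, where $\mathcal{C}$ itself is not a $C^1$ manifold. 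I would then produce $u_\lambda$ as a suitable extremum of $J_\lambda$ on $\mathcal{N}_\lambda$ via the direct method.

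The main obstacle is coercivity of the minimization, and it behaves differently in the two regimes. For $p>q$ the energy $J_\lambda$ is coercive and weakly lower semicontinuous on all of $W$; its infimum over $W$ is strictly negative (realised by the witness $tw$ for small $t>0$) and is attained at a non-trivial minimiser, which is the desired eigenfunction. For $p<q$ (including the previously open range $q\in(1,2)$), $J_\lambda$ is unbounded below on $W$ and one cannot simply minimise $(\tfrac1q-\tfrac1p)^{-1}J_\lambda$ over all of $\mathcal{N}_\lambda$, because concentration of minimising sequences near a non-zero constant drives the infimum of $\int_\Omega|\nabla u|^p$ to zero. To bypass this I would restrict the minimisation to $\mathcal{N}_\lambda\cap\mathcal{C}$ --- legitimate since any eigenfunction must lie in $\mathcal{C}$ --- and use the $\mathcal{C}$-Poincar\'e inequality embodied in \eqref{eq:1.6} to bound $\int_\Omega|\nabla u|^p$ away from $0$ along any minimising sequence. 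The remaining compactness work uses Rellich--Kondrachov, trace compactness, and (for $q<2$) dominated-convergence arguments to pass to the limit in the nonlinear terms $|u|^{q-2}u$; one finally verifies that the weak limit stays on $\mathcal{N}_\lambda$ and is non-zero, delivering an eigenfunction for the prescribed eigenvalue $\lambda$.
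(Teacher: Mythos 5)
Your overall architecture (necessity via the Rayleigh quotient, sufficiency split into $p>q$ and $p<q$ with a Nehari-type manifold in the second case) matches the paper, and the necessity half is essentially the paper's argument. But the sufficiency half has two genuine gaps. First, for $p>q$ you assert that $\mathcal{J}_\lambda$ is coercive and bounded below on all of $W$ and minimise it there. This is false: for constant functions $u\equiv c$ one has $\mathcal{J}_\lambda(c)=-\frac{\lambda}{q}|c|^q\bigl(\int_\Omega a\,dx+\int_{\partial\Omega}b\,d\sigma\bigr)\rightarrow-\infty$ as $|c|\rightarrow\infty$, since $(h_{ab})$ makes the bracket positive; the infimum over $W$ is $-\infty$. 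Coercivity holds only on the cone $\mathcal{C}$ (the paper's Lemma~\ref{lema3.1}), so the minimisation must be constrained to $\mathcal{C}$ from the start.

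Second, and more importantly, once the minimisation is constrained to $\mathcal{C}$ (which you do explicitly for $p<q$ by working on $\mathcal{N}_\lambda\cap\mathcal{C}$), you must explain why the constrained minimiser is a critical point of $\mathcal{J}_\lambda$ on all of $W$. Your ``natural constraint'' computation disposes only of the multiplier attached to the Nehari constraint $\langle\mathcal{J}_\lambda'(u),u\rangle=0$; the membership constraint $\int_\Omega a|u|^{q-2}u\,dx+\int_{\partial\Omega}b|u|^{q-2}u\,d\sigma=0$ is still active, and for $q\in(1,2)$ --- precisely the open case the theorem is meant to settle --- this functional is not $C^1$, so no multiplier rule applies to it. This is the crux of the paper and the point your proposal leaves unresolved. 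The paper's device (adapted from Brasco--Franzina) is to perturb the minimiser to $u_*+\frac{1}{n}v+s_n$ with $s_n$ chosen so that the perturbation stays in $\mathcal{C}$, prove that $(ns_n)_n$ is bounded, and (when $p<q$) additionally rescale by $t_n$ to return to $\mathcal{N}_\lambda$ while controlling $n(t_n-1)$; passing to the limit in the difference quotients then gives $\langle\mathcal{J}_\lambda'(u_*),v\rangle\geq 0$ for every Lipschitz $v$, hence $\mathcal{J}_\lambda'(u_*)=0$ by density. Without this, or an equivalent mechanism for handling the non-smooth constraint, the sufficiency direction is not established for $q\in(1,2)$.
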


The conclusion that the eigenvalue set contains an interval is due to the fact that the operator $\mathcal{A}$ is
nonhomogeneous ($p\neq q$). Note also that Theorem \ref{teorema1} provides a full description of the eigenvalue set of $\mathcal{A}$.

On the other hand, a complete description of the eigenvalue set in the homogeneous case $p=q$ is not known even in particular cases.  For example, if $p=q>1$, $a\equiv 1$, $b\equiv 0$, then the eigenvalue set of the corresponding problem is fully known only if $p=q=2$ (i.e., $\mathcal{A}=-2\Delta$); otherwise, i.e. if $p=q\in (1, \infty)\setminus \{ 2\}$, then it is only known, as a consequence of the
Ljusternik-Schnirelman theory, that there exists a sequence of positive eigenvalues of problem \eqref{eq:1.1} with $\mathcal{A}=-2\Delta_p$ (see, e.g., \cite[Chap. 6]{GP}), but this sequence may not constitute the whole eigenvalue set.

\section{Preliminary results}

Let $q\in (1,\infty)$ be arbitrary but fixed. As we have pointed out in Introduction, all eigenfunctions corresponding to positive eigenvalues necessarily belong to the set
\[
\mathcal{C}:=\Big\{ u\in W;~\int_\Omega a\mid  u\mid ^{q-2} u  ~dx+\int_{\partial\Omega} b \mid  u\mid ^{q-2} u  ~d\sigma=0\Big\}.
\]
This is a symmetric cone. Moreover,   $\mathcal{C}$ is a weakly closed subset of $W$. Indeed, let $\big(u_n\big)_n\subset \mathcal{C}$ such that $u_n\rightharpoonup u_0$ in $W.$ Since  $W\hookrightarrow L^q (\Omega)$ and $W\hookrightarrow L^q (\partial\Omega)$ compactly,  there exists a subsequence of $\big(u_n\big)_n$, also denoted $\big(u_n\big)_n$, such that
\[
u_n\rightarrow u_0 ~\mbox{in} ~L^{q}(\Omega),~u_n\rightarrow u_0 ~\mbox{in} ~L^{q}(\partial\Omega).
\]
By Lebesgue's Dominated Convergence Theorem (see also \cite[Theorem 4.9] {Br}) we obtain $u_0\in \mathcal{C}.$ In addition, $\mathcal{C}$ has nonzero elements (see \cite[Section 2]{BM}).

Now, for $r>1$ define the set
\[
\mathcal{C}_r:=\Big\{ u\in W^{1,r}(\Omega);~\int_\Omega a\mid  u\mid ^{r-2} u  ~dx+\int_{\partial\Omega} b \mid  u\mid ^{r-2} u  ~d\sigma=0\Big\}.
\]
Arguing as before, we infer that for all $r>1,$ $\mathcal{C}_r$ is a symmetric, weakly closed (in $W^{1,r}(\Omega)$) cone, containing infinitely many nonzero elements.
Note also that $\mathcal{C}=\mathcal{C}_q$ if $q>p$, otherwise (i.e., if $q<p$), then  $\mathcal{C}$ is a proper subset of $\mathcal{C}_q.$

Next, for $q>1,$ we consider the eigenvalue problem
\begin{equation}\label{eq:2.1}
\left\{\begin{array}{l}
-\Delta_q u=\lambda a(x) \mid u\mid ^{q-2}u\ \ \mbox{in} ~ \Omega,\\[1mm]
\mid \nabla u\mid^{q-2}\frac{\partial u}{\partial\nu}=\lambda b(x) \mid u\mid ^{q-2}u  ~ \mbox{on} ~ \partial\Omega.
\end{array}\right.
%\tag{1.1}
\end{equation}

As usual, the number $\lambda\in \mathbb{R}$ is said to be an eigenvalue of problem \eqref{eq:2.1} if there exists a function $u_\lambda\in W^{1,q} (\Omega\setminus \{0\})$ such that
\begin{equation}\label{eq:2.2}
\begin{split}
\int_\Omega &\mid \nabla u_\lambda\mid ^{q-2}\nabla u_\lambda \cdot \nabla w~dx \\
=&\lambda\Big(\int_\Omega a\mid  u_\lambda\mid ^{q-2} u_\lambda  w~dx+\int_{\partial\Omega} b \mid  u_\lambda\mid ^{q-2} u_\lambda  w~d\sigma\Big)~\forall~w\in W^{1,q}(\Omega).
\end{split}
\end{equation}
Obviously, $\lambda_0=0$ is an eigenvalue of problem \eqref{eq:2.1} and any other eigenvalue belongs to $(0, \infty).$ Moreover, if we consider an eigenvalue $\lambda >0$ of \eqref{eq:2.1} and choose $w\equiv 1$ in \eqref{eq:2.2}, we deduce that every eigenfunction $u_\lambda$ corresponding to $\lambda$ belongs to $\mathcal{C}_{q}\setminus\{0\}.$
We also define
\begin{equation}\label{eq:2.3}
\lambda_{1q}:=\underset{w\in\mathcal{C}_{1q}\setminus\{0\}}{\inf }~\frac{\int_\Omega \mid\nabla w\mid^q~dx}{\int_{\Omega} a\mid  w\mid^q~dx+\int_{\partial\Omega} b\mid\nabla w\mid^q~d\sigma}.
%\tag{2.3}
\end{equation}

Now, let us consider the functional
$$J: W^{1,q}(\Omega)\rightarrow \mathbb{R},~J(w):=\int_\Omega \mid  \nabla w\mid ^{q} ~dx,
$$
which is positively homogeneous of order $q$. By standard arguments we can infer that functional $J$ is  convex and weakly lower semicontinuous for all $q>1.$

Consider the minimization problem
\begin{equation}\label{eq:2.4}
\underset{w\in\mathcal{C}_{1q}}{\inf }~J(w) \, ,
%\tag{2.2}
\end{equation}
where \[
\mathcal{C}_{1q}:=\mathcal{C}_{q}\cap\Big\{ u\in W^{1,q}(\Omega);\int_\Omega a\mid  u\mid ^{q} ~dx+\int_{\partial\Omega} b\mid u\mid^q~d\sigma=1\Big\}.
\] The next result states that $J$ attains its minimal value over the set $\mathcal{C}_{1q},$  this value is positive and is equal to $\lambda_{1q}$.

\begin{lemma}\label{lema1}
If $q\in (1, \infty),$ then there exists $u_{*}\in \mathcal{C}_{1q}$ such that $$\mu:=J(u^{*})=\underset{w\in\mathcal{C}_{1q}}{\inf }~J(w)>0.$$
Moreover, $\mu=\lambda_{1q}$ and it is the lowest positive eigenvalue of problem \eqref{eq:2.1} with  eigenfunction $u_*$.
\end{lemma}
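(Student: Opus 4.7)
The plan is to apply the direct method in the calculus of variations in three stages: existence of a minimizer, identification $\mu=\lambda_{1q}>0$, and derivation of the Euler--Lagrange equation for $u_*$.

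\emph{Existence.} Let $(u_n)\subset\mathcal{C}_{1q}$ be a minimizing sequence for $J$. First I would show boundedness of $(u_n)$ in $W^{1,q}(\Omega)$: if $\|u_n\|_{W^{1,q}}\to\infty$, then after normalizing $v_n:=u_n/\|u_n\|_{W^{1,q}}$, the fact that $J(v_n)=J(u_n)/\|u_n\|_{W^{1,q}}^{q}\to 0$ together with weak lower semicontinuity of $J$ forces any weak limit $v_\infty$ to be a constant function. Weak closedness of $\mathcal{C}_q$ then puts $v_\infty \in \mathcal{C}_q$, and \eqref{eq:1.2} yields $v_\infty=0$, contradicting $\|v_n\|_{W^{1,q}}=1$ after noting that the gradient part of the norm vanishes in the limit. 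Extracting $u_n\rightharpoonup u_*$, weak closedness of $\mathcal{C}_q$ combined with strong $L^q(\Omega)$ and $L^q(\partial\Omega)$ convergence gives $u_*\in\mathcal{C}_{1q}$, while weak lower semicontinuity of $J$ delivers $J(u_*)=\mu$. Positivity $\mu>0$ and the equality $\mu=\lambda_{1q}$ (reading the infimum in \eqref{eq:2.3} over $\mathcal{C}_q\setminus\{0\}$ with denominator $\int_\Omega a|w|^q\, dx+\int_{\partial\Omega}b|w|^q\, d\sigma$) are then immediate: $\mu=0$ would make $u_*$ a nonzero constant in $\mathcal{C}_q$, forbidden by \eqref{eq:1.2}; and the $q$-homogeneity of numerator and denominator allows any nonzero $w\in\mathcal{C}_q$ to be rescaled into $\mathcal{C}_{1q}$ without changing the Rayleigh quotient $R(w):=J(w)/G(w)$, where $G(w):=\int_\Omega a|w|^q\, dx+\int_{\partial\Omega}b|w|^q\, d\sigma$.

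\emph{Eigenvalue equation.} For arbitrary $w\in W^{1,q}(\Omega)$ I would perturb along the path $\gamma(t):=u_*+tw+s(t)$, where $s(t)\in\mathbb{R}$ is a real constant chosen so that $\gamma(t)\in\mathcal{C}_q$. Existence of such $s(t)$ for $|t|$ small, with $s(0)=0$ and $s(t)\to 0$, follows from the intermediate value theorem applied to
\[
s\mapsto H(u_*+tw+s):=\int_\Omega a|u_*+tw+s|^{q-2}(u_*+tw+s)\, dx+\int_{\partial\Omega}b|u_*+tw+s|^{q-2}(u_*+tw+s)\, d\sigma,
\]
which is continuous, strictly increasing in $s$ (since $\xi\mapsto|\xi|^{q-2}\xi$ is strictly increasing and \eqref{eq:1.2} holds), and diverges to $\pm\infty$ as $s\to\pm\infty$. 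Since $u_*$ minimizes the $0$-homogeneous Rayleigh quotient $R$ on $\mathcal{C}_q\setminus\{0\}$, we have $R(\gamma(t))\ge R(u_*)=\lambda_{1q}$ with equality at $t=0$. Expanding to first order in $t$ and using two decisive features --- $\nabla\gamma(t)=\nabla u_*+t\nabla w$ (so $s(t)$ contributes nothing to $J$ through the gradient), and $G'(u_*)\cdot 1 = q\bigl(\int_\Omega a|u_*|^{q-2}u_*\, dx+\int_{\partial\Omega}b|u_*|^{q-2}u_*\, d\sigma\bigr)=0$ by $u_*\in\mathcal{C}_q$ (so the $s(t)$-term disappears from the first-order expansion of $G$) --- letting $t\to 0^\pm$ yields
\[
\int_\Omega|\nabla u_*|^{q-2}\nabla u_*\cdot\nabla w\, dx=\lambda_{1q}\Bigl(\int_\Omega a|u_*|^{q-2}u_*w\, dx+\int_{\partial\Omega}b|u_*|^{q-2}u_*w\, d\sigma\Bigr),
\]
which is precisely \eqref{eq:2.2} with $\lambda=\mu$.

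\emph{Main obstacle.} The subtle point is justifying this first-order expansion when $q\in(1,2)$, since the constraint functional $H$ is then not $C^1$ on $W^{1,q}(\Omega)$. I would bypass a full implicit function theorem by proving the quantitative estimate $|s(t)|\le C|t|$ directly, from the uniform monotonicity of $H$ in the $s$-direction (through \eqref{eq:1.2}) combined with a H\"older-type bound on $\xi\mapsto|\xi|^{q-2}\xi$. This keeps the remainder $o(\|tw+s(t)\|_{W^{1,q}})$ genuinely $o(t)$, so that the derivative inequality survives passage to the limit without requiring differentiability of $s(\cdot)$ itself.
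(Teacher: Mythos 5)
Your existence argument and the identification $\mu=\lambda_{1q}>0$ match the paper's proof, and your perturbation scheme for the Euler--Lagrange equation ($\gamma(t)=u_*+tw+s(t)$ with a constraint-restoring constant that is invisible to $J$ and is killed at first order in $G$ because $u_*\in\mathcal{C}_q$) is exactly the Brasco--Franzina device the paper uses. The gap is in how you obtain the decisive estimate $|s(t)|\le C|t|$. The monotonicity of $H$ in the $s$-direction is \emph{not} uniform when $q\in(1,2)$: the derivative of $\xi\mapsto|\xi|^{q-2}\xi$ is $(q-1)|\xi|^{q-2}$, which degenerates as $|\xi|\to\infty$, so the best uniform lower bound you can extract (after localizing to a set where $|u_*|$ is bounded and the weight is positive) is $|H(u_*+tw+s)-H(u_*+tw)|\ge c\,|s|$, while the H\"older-$(q-1)$ continuity of $\xi\mapsto|\xi|^{q-2}\xi$ only gives $|H(u_*+tw)-H(u_*)|\le C|t|^{q-1}$ (and this order is sharp if, say, $u_*$ vanishes on a set of positive $a$-measure). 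Combining the two yields only $|s(t)|\le C|t|^{q-1}$, strictly weaker than $O(|t|)$ for $q<2$, and that is not enough: the remainder $o(\|tw+s(t)\|_{L^q})=o(|t|^{q-1})$ in the first-order expansion of $G$ need not be $o(|t|)$, so the derivative inequality does not survive division by $t$. The paper obtains the linear bound by a qualitative sign argument instead: it takes the perturbation $v$ Lipschitz (hence bounded), sets $t=1/n$, and observes that if $ns_n\to\pm\infty$ then $v+ns_n$ has a definite sign on all of $\Omega$, so $u_n=u_*+\frac1n(v+ns_n)$ lies strictly above (or below) $u_*$ pointwise, whence $H(u_n)\neq H(u_*)$ by strict monotonicity of $\xi\mapsto|\xi|^{q-2}\xi$ together with $(h_{ab})$, contradicting $H(u_n)=H(u_*)=0$. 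Density of Lipschitz functions in $W^{1,q}(\Omega)$ then recovers arbitrary test directions; your choice of a general $w\in W^{1,q}(\Omega)$ forecloses this route, since $w+ns_n$ need not have a sign.

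A second, smaller omission: the lemma also asserts that $\mu=\lambda_{1q}$ is the \emph{lowest} positive eigenvalue, and your proposal never addresses this. It is short but must be said: if some $\lambda\in(0,\lambda_{1q})$ had an eigenfunction $u_\lambda\in\mathcal{C}_q\setminus\{0\}$, testing \eqref{eq:2.2} with $w=u_\lambda$ and comparing with the infimum \eqref{eq:2.3} gives $0<(\lambda_{1q}-\lambda)\bigl(\int_\Omega a|u_\lambda|^q\,dx+\int_{\partial\Omega}b|u_\lambda|^q\,d\sigma\bigr)\le 0$, a contradiction.
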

\begin{proof}
It is well-known that functional $J$ is of class $C^1$ on $W^{1,q}(\Omega)$ and obviously $J$ is bounded below. Let $\big(u_n\big)_n\subset \mathcal{C}_{1q}$
be a minimizing sequence for $J$, i. e.,
$$
J(u_n)\rightarrow \underset{w\in\mathcal{C}_{1q}}{\inf }~J(w)=\mu.
$$
Let us prove that $\big(u_n\big)_n$ is bounded in $W^{1,q}(\Omega)$. Assume the contrary, that there exists a subsequence of $\big(u_n\big)_n$, again
denoted $\big(u_n\big)_n$, such that $\parallel u_n\parallel_{W^{1,q}(\Omega)}\rightarrow\infty$ as $n\rightarrow\infty.$ Define
$$
v_n:=\frac{u_n}{\parallel u_n\parallel_{W^{1,q}(\Omega)}} \ \ \ \forall~n\in \mathbb{N}\, .
$$
Clearly, the sequence $\big(v_n\big)_n$ is bounded in $ W^{1,q}(\Omega)$ so there exist a $v\in W^{1,q}(\Omega)$ and a subsequence of $\big(v_n\big)_n$, again denoted
$\big(v_n\big)_n$, such that
\[
v_n\rightharpoonup v ~\mbox{in} ~W^{1,q}(\Omega).
\]
Since $W^{1,q}(\Omega)\hookrightarrow L^q (\Omega)$ and $W^{1,q}(\Omega)\hookrightarrow L^q (\partial\Omega)$ compactly,  we have up to a subsequence
\[
v_n\rightarrow v ~\mbox{in} ~L^{q}(\Omega),~v_n\rightarrow v ~\mbox{in} ~L^{q}(\partial\Omega).
\]
As $\parallel v_n\parallel_{W^{1,q}(\Omega)}=1~\forall ~n\in \mathbb{N}$, we have $\parallel v\parallel_{W^{1,q}(\Omega)}=1$, and
\begin{equation*}
\begin{split}
\int_\Omega \mid \nabla v\mid ^{q}~dx\leq \underset{n\rightarrow \infty}{\liminf}~\int_\Omega \mid \nabla v_n\mid ^{q}~dx=\underset{n\rightarrow \infty}{\liminf}\frac{1}{\parallel u_n\parallel^q_{L^q(\Omega)}}J(u_n)=0,
\end{split}
\end{equation*}
which shows that $v$ is a constant function. On the other hand, since $\big(v_n\big)_n\subset \mathcal{C}_q$ and $\mathcal{C}_q$ is weakly closed in
$W^{1,q}(\Omega)$, we infer that $v\in \mathcal{C}_q$, hence $v\equiv 0$. But this contradicts the fact that $\parallel v\parallel_{W^{1,q}(\Omega)}=1$.
Therefore, $\big(u_n\big)_n$ is indeed bounded in $W^{1,q}(\Omega)$, hence there exist $u_*\in W^{1,q}(\Omega)$ and a subsequence of $\big(u_n\big)_n$, also
denoted $\big(u_n\big)_n$, such that
\[
u_n\rightharpoonup u_* ~\mbox{in} ~W^{1,q}(\Omega),
\]
\[
u_n\rightarrow u_* ~\mbox{in} ~L^{q}(\Omega),~u_n\rightarrow u_* ~\mbox{in} ~L^{q}(\partial\Omega).
\]
By Lebesgue's Dominated Convergence Theorem we obtain $u_*\in \mathcal{C}_{1q}$, so the weak lower semicontinuity of $J$ leads to $\mu=J(u_*).$
In addition, $J(u_*)>0$. Indeed, assuming by contradiction that $J(u_*)=0$ would imply that $u_* \equiv Const.$, which is impossible because $u_*\in
\mathcal{C}_{1q}$ (see also assumption $(h)_{ab}$).

Since the functional $J$ is positively homogeneous of order $q$, we have
\begin{equation}\label{eq:2.5}
\mu=\underset{w\in\mathcal{C}_q\setminus\{0\}}{\inf }~\frac{\int_\Omega\mid\nabla w\mid^q~dx}{\int_{\Omega}a \mid w\mid^q~dx+\int_{\partial\Omega} b \mid w\mid^q~d\sigma},
\end{equation}
thus, we derive from \eqref{eq:2.3} $\mu=\lambda_{1q}.$

We are now going to prove that $\mu=\lambda_{1q}$ is the lowest positive eigenvalue of problem \eqref{eq:2.1} with corresponding eigenfunction $u_*$.

\noindent For $q\in [2, \infty )$ the result has been proved in \cite[Remark 3.2]{BM}. If $q\in (1, 2)$, since the constraint set $\mathcal{C}_{q}$ is no longer a $C^1$ manifold,  we cannot use the Lagrange Multipliers Theorem as in \cite{BM}.

In order to overcome this inconvenience, let us define $J_\mu:W^{1,q}(\Omega)\rightarrow\mathbb{R},$
\begin{equation}\label{eq:2.6}
J_\mu(u)=\frac{1}{q}\int_\Omega \mid \nabla u\mid^{q}~dx-\frac{\mu}{q}\Big(\int_\Omega a \mid u\mid^q ~dx+\int_{\partial\Omega} b \mid u\mid^q ~d\sigma \Big)~\forall~u\in W^{1,q}(\Omega),
\end{equation}
which is a $C^1$ functional whose derivative is given by
\begin{equation}\label{eq:2.7}
\begin{split}
\langle{J}_\mu'(u),w\rangle&=
\int_\Omega\mid \nabla u\mid ^{q-2}\nabla u\cdot\nabla w~dx\\
&-\mu \Big( \int_\Omega a\mid u\mid^{q-2} uw~dx+\int_{\partial\Omega} b \mid u\mid^{q-2} u w ~d\sigma\Big)
\end{split}
\end{equation}
for all $u, w\in W^{1,q}(\Omega).$ In order to prove that $\mu=\lambda_{1q}$ is an eigenvalue of problem \eqref{eq:2.1} with eigenfunction $u_* \not\equiv 0$, it is sufficient to show that $J^{\prime}_{\mu}(u_*)=0.$ In this case we make use of an argument in \cite[Lemma 5.8]{BF}.

In this respect, we fix $v\in \mbox{Lip}(\Omega)$ arbitrarily and try to construct a sequence $\big(u_n\big)_n\subset \mathcal{C}_q$ such that $u_n\rightarrow u_*$ in $W^{1,q}(\Omega)$ as $n\rightarrow\infty.$ To this aim, let us define $\mathcal{I}:W^{1,q}(\Omega)\rightarrow \mathbb{R},$
\[
\mathcal{I}(w):=\int_{\Omega}a \mid w\mid^{q-2} w~dx+\int_{\partial\Omega} b \mid w\mid^{q-2} w~d\sigma~\forall~w\in W^{1,q}(\Omega),
\]
and for each $n\in \mathbb{N}^*,$
\begin{equation}\label{eq:2.8}
g_n:\mathbb{R}\rightarrow \mathbb{R}, ~~g_n(s):=\mathcal{I}\Bigl(u_*+\frac{1}{n}v+s\Bigr)~\forall~s\in \mathbb{R}.
\end{equation}
Since the function $s\rightarrow \mid w+s\mid ^{q-2} \big(w+s\big)$ is strictly increasing on $\mathbb{R},$  $g_n$ is increasing on $\mathbb{R}.$ In fact, $g_n$  is strictly increasing on $\mathbb{R}$ since, by virtue of  $(h_{a b})$, we see that \eqref{eq:1.2}
implies that either $|\{x\in \Omega;~a(x)> 0\}|_N>0$ or $a=0$ a.e. in $\Omega$ and $|\{x\in \partial\Omega;~b(x)> 0\}|_{N-1}>0$. Here $|\cdot |_N$, $|\cdot |_{N-1}$ denote the Lebesgue measures of the corresponding sets.

In order to show that for all $n\in \mathbb{N}^*$ there exists $s_n\in\mathbb{R}$ such that $g_n(s_n)=0$, i.e. $u_*+\frac{1}{n}v+s_n\in \mathcal{C}_q$, we also define $h_n:\mathbb{R}\rightarrow \mathbb{R},$
\begin{equation}\label{eq:2.9}
h_n(s)=\int_\Omega a\Bigl\lvert u_*+\frac{1}{n} v+s \Bigr\lvert^q ~dx+\int_{\partial\Omega} b \Bigl\lvert u_*+\frac{1}{n} v+s \Bigr\lvert^q ~d\sigma~ \forall~n\in \mathbb{N}^*~\forall~s\in\mathbb{R}.
\end{equation}
It is easily seen that $h_n$ is coercive, because
\begin{equation*}
\begin{split}
h_n(s)&\geq 2^{-q} \mid s\mid ^q \big( \parallel a\parallel_{L^\infty(\Omega)}\mid \Omega\mid_N +\parallel b\parallel_{L^\infty(\partial\Omega)}\mid \partial\Omega\mid_{N-1}\big)\\
&-\int_\Omega a\Bigl\lvert u_*+\frac{1}{n} v \Bigr\lvert^q ~dx-\int_{\partial\Omega}b \Bigl\lvert u_*+\frac{1}{n} v \Bigr\lvert^q ~d\sigma .
\end{split}
\end{equation*}
Here, we have also used the inequality $$\mid x\mid^q \leq (\mid x+y\mid+\mid y\mid)^q\leq 2^q(\mid x+y\mid^q+\mid y\mid^q)~\forall~x, y\in \mathbb{R},~q>1.$$
 Moreover, $h_n$ is continuously differentiable, $h_n^\prime=g_n$  (see \cite [Theorem 2.27]{Fo}) and convex (its derivative $g_n$ is an increasing function).
Therefore, for all $n\in \mathbb{N}^*,$ $h_n$ has a minimizer $s_n$, such that $h_n^\prime(s_n)=g_n (s_n)=0$.

Next, we want to show that the sequence $\big( n s_n\big)_n$ is bounded. Arguing by contradiction, let us assume that, after passing to a subsequence if necessary, $n s_n\rightarrow \infty$ or  $n s_n\rightarrow -\infty$ as $n\rightarrow\infty.$ Since $v\in \mbox{Lip} (\Omega),$  there exists $N_1$ large enough such that, we have either $v(\cdot)+n s_n>0 ~\mbox{in}~ \Omega,$ or $v(\cdot)+n s_n<0 ~\mbox{in}~ \Omega~\forall~n\geq N_1.$

Set
\begin{equation}\label{eq:2.10}
u_n:=u_*+\frac{1}{n} v+s_n~\forall~n\in \mathbb{N}^*.
\end{equation}
Obviously, $\big(u_n\big)_n \subset \mathcal{C}_q.$

Since the functions $g_n, \, n\geq N_1,$ are strictly increasing on $\mathbb{R}$, we have
\begin{equation}\label{eq:2.11}
0=g_n(u_n)> g(u_*)=0~~\forall~n\geq N_1,
\end{equation}
if $v(\cdot)+n s_n>0 ~\mbox{in}~ \Omega,$ or the reverse inequality in the latter case, when $v(\cdot)+n s_n<0 ~\mbox{in}~ \Omega $. So, in both cases we get a contradiction.

Consequently, the sequence $\big(n s_n\big)_n$ is indeed bounded. This implies that there exists $S\in \mathbb{R}$ such that, on a subsequence, $ns_n\rightarrow S$ as $n\rightarrow\infty.$ Therefore, on a subsequence, we have
\begin{equation}\label{eq:2.12}
n\big(u_n- u_*\big)\rightarrow v+S ~\mbox{and}~ u_n\rightarrow u_*~\mbox{in}~W^{1,q}(\Omega)~\mbox{as}~n\rightarrow \infty.
\end{equation}
In addition, there exists $N_2\in \mathbb{N}^*$ such that $u_n\not\equiv 0~\forall~n\geq N_2.$
Now, making use of \eqref{eq:2.5} and \eqref{eq:2.6} it is easy to observe that $u_*$ minimizes functional $J_\mu$ over $ \mathcal{C}_{q}\setminus\{0\}$. By using the minimality of $u_*$ and the fact that $u_n\in \mathcal{C}_{q}\setminus \{0\}~\forall~n\geq N_2,$ we obtain that
\begin{equation}\label{eq:2.13}
0\leq \underset{n\rightarrow\infty}{\lim}  \frac{J_\mu(u_n)-J_\mu(u^*)}{(1/n)}.
\end{equation}
On the other hand,
\begin{equation}\label{eq:2.14}
 n\big({J}_\mu(u_n)-J_\mu(u_*)\big)=\langle {J}^{\prime}_q(u_*), n(u_n-u_*)\rangle+o(n; u_*, v),
\end{equation}
where $o(n; u_*, v)$ is a notation for the  term  which tends to zero in the definition of the Fr\'{e}chet differential of  ${J}_\mu$ at $u_*,$ that is $o(n; u_*, v)\rightarrow 0$ as $n\rightarrow \infty$.
It follows from  \eqref{eq:2.12}-\eqref{eq:2.14} in combination with $u_*\in \mathcal{C}_{1q}$ that
\begin{equation}\label{eq:2.15}
\begin{split}
0\leq&\underset{n\rightarrow \infty}{\lim} n\big(J_\mu(u_n)-J_\mu(u_*)\big)=\underset{n\rightarrow \infty}{\lim}\langle J_\mu^{\prime}(u_*), n(u_n-u_*)\rangle+o(n; u_*, v)\\
&=\langle J_\mu^{\prime}(u_*), v+S\rangle=\langle J_\mu^{\prime}(u_*), v\rangle.
\end{split}
\end{equation}

A similar reasoning with $-v$ instead of $v$ shows that $\langle{J}_\mu (u_*), v\rangle=0$  for every Lipschitz test function, $v$. Taking into account the density of Lipschitz functions in $W^{1,q}(\Omega)$, which is true since $\partial \Omega$ is smooth (hence Lipschitz, see \cite[Theorem 3.6]{G}), we obtain that $u_*$ is an eigenfunction of problem \eqref{eq:2.1} corresponding to eigenvalue $\mu=\lambda_{1q}>0.$

It remains to show that there is no eigenvalue of problem \eqref{eq:2.1} in the open interval $(0, \lambda_{1q}).$

Assume by way of contradiction that there exists $\lambda \in(0, \lambda_{1q})$ for which \eqref{eq:2.1} possesses a solution $u_\lambda\in \mathcal{C}_{q}\setminus\{0\}$. It follows from \eqref{eq:2.2} with $w=u_\lambda$ and \eqref{eq:2.3}  that
\begin{equation*}
\begin{split}
0<&(\lambda_{1q}-\lambda)\Big(\int_\Omega a\mid u_\lambda\mid^q~dx+\int_{\partial\Omega} b\mid u_\lambda\mid^q~d\sigma\Big) \leq \int _\Omega \mid \nabla u_\lambda\mid^q~dx\\
-&\lambda \Big(\int_\Omega a\mid u_\lambda\mid^q~dx+\int_{\partial\Omega} b\mid u_\lambda\mid^q~d\sigma\Big)=0,
\end{split}
\end{equation*}
which is a contradiction. This concludes the proof.
\end{proof}
\begin{remark}\label{remark11}
If $u_\lambda $ is an eigenfunction corresponding to an eigenvalue $\lambda>0$, then we have  from \eqref{eq:1.3}
\[
\int_\Omega \Big(\mid \nabla u_\lambda\mid ^{p}+\mid \nabla u_\lambda\mid ^{q}\Big)~dx=\lambda\Big(\int_\Omega a\mid  u_\lambda\mid ^{q} ~dx+\int_{\partial\Omega} b
\mid  u_\lambda\mid ^{q} ~d\sigma\Big),
\]
thus $u$ cannot be a constant function (see \eqref{eq:1.2}) and so
\[
\int_\Omega a\mid  u_\lambda\mid ^{q} ~dx+\int_{\partial\Omega} b \mid  u_\lambda\mid ^{q} ~d\sigma> 0.
\]
Therefore, denoting $$\Gamma_1(u_\lambda):=\{x\in \Omega;~a(x)u_\lambda(x)\neq 0\},~\Gamma_2(u_\lambda):=\{x\in \partial\Omega;~b(x)u_\lambda(x)\neq 0\},$$ we see that
either $|\Gamma_1(u_\lambda)|_N>0$ or $|\Gamma_2(u_\lambda)|_{N-1}>0$. Obviously, $u_{\lambda}$ corresponding to any eigenvalue $\lambda >0$ cannot be a constant function (see \eqref{eq:1.3} with $v=u_\lambda$
and \eqref{eq:1.2}).
 \end{remark}
\begin{remark}\label{remarca}
Note that the infimum on $\mathcal{C}\setminus\{0\}$ of the Rayleigh-type quotient associated to the eigenvalue problem \eqref{eq:1.1} is given by
\begin{equation}\label{eq:2.16}
\widetilde{\lambda}_1:=\underset{w\in\mathcal{C}\setminus\{0\}}{\inf }~\frac{\frac{1}{q}\int_\Omega\mid\nabla w\mid^q~dx+\frac{1}{p}\int_\Omega\mid\nabla w\mid^p~dx}{\frac{1}{q}\big(\int_{\Omega}a\mid w\mid^q~dx+\int_{\partial\Omega}b\mid w\mid^q~d\sigma\big)}.
%\tag{2.4}
\end{equation}
In fact, $\widetilde{\lambda}_1=\lambda_1.$ Indeed, it is obvious that $\lambda_1\leq\widetilde{\lambda}_1$ and for the converse inequality we
note that, $\forall v\in
\mathcal{C}\setminus\{0\}$, $t>0,$ we have $tv\in \mathcal{C}\setminus\{0\}$ and
\begin{equation*}
\begin{split}
\widetilde{\lambda}_1&=\underset{w\in\mathcal{C}\setminus\{0\}}{\inf }~\frac{\frac{1}{q}\int_\Omega\mid\nabla w\mid^q~dx+
\frac{1}{p}\int_\Omega\mid\nabla w\mid^p~dx}{\frac{1}{q}\big(\int_{\Omega}a\mid w\mid^q~dx+\int_{\partial\Omega}b\mid w\mid^q~d\sigma\big)}\leq\\
&\frac{\int_\Omega\mid\nabla v\mid^q~dx}{\int_{\Omega}a\mid v\mid^q~dx+\int_{\partial\Omega}b\mid v\mid^q~d\sigma}+
t^{p-q}\frac{q\int_\Omega\mid\nabla v\mid^p~dx}{p\big(\int_{\Omega}a\mid w\mid^q~dx+\int_{\partial\Omega}b\mid v\mid^q~d\sigma\big)}.
\end{split}
\end{equation*}
Now letting $t\rightarrow\infty$ if $q>p$, and $t\rightarrow 0$ if $q<p$, then passing to infimum for $v\in \mathcal{C}\setminus\{0\}$
we get the desired inequality. Hence $\lambda_1$ can be expressed in two different ways (see \eqref{eq:1.6} and \eqref{eq:2.16}).
\end{remark}

\begin{remark}\label{remark2}
As a consequence of Lemma~\ref{lema1} we have $\lambda_1>0$. Indeed, from \eqref{eq:1.6} we have
\[
\lambda_1:=\underset{w\in\mathcal{C}_1}{\inf }~\int_{\Omega}\mid\nabla w\mid^q~dx \, ,
\]
where $\mathcal{C}_1=\{v\in \mathcal{C}; \int_{\Omega}a\mid v\mid^q~dx+\int_{\partial\Omega}b\mid v\mid^q~d\sigma=1\}$. So $\lambda_1=J(u^*)$ for $p\leq q$
  and $\lambda_1\geq J(u^*)$ if $p > q.$ Thus in both cases $\lambda_1>0.$
\end{remark}

\section{ Proof of the main result}

We have already stated that $\lambda_0=0$ is an eigenvalue of problem \eqref{eq:1.1} and any other eigenvalue of this problem belongs to $(0,\infty)$.
We verify next that no eigenvalue belongs to $(0, \lambda_1].$ To argue by contradiction, assume that problem \eqref{eq:1.1} possesses an eigenvalue $\lambda\in (0, \lambda_1]$ with a corresponding eigenfunction $u_\lambda.$ Then, from \eqref{eq:1.3}
\begin{equation}\label{eq:3.1}
\int_\Omega \big(\mid \nabla u_\lambda\mid ^{p}+\mid \nabla u_\lambda\mid ^{q}\big)~dx=\lambda\Big(
\int_\Omega a\mid  u_\lambda\mid ^{q}~dx+\int_{\partial\Omega} b \mid  u_\lambda\mid ^{q} ~d\sigma
\Big).
%\tag{3.1}
\end{equation}
Note that $\int_\Omega a\mid  u_\lambda\mid ^{q}~dx+\int_{\partial\Omega} b \mid  u_\lambda\mid ^{q} ~d\sigma \neq 0$, otherwise $u_{\lambda}\equiv Const.$
which is impossible (see Remark \ref{remark11}).
On the other hand, as $u_\lambda\in \mathcal{C}\setminus\{0\}$, we derive from \eqref{eq:1.6} and \eqref{eq:3.1}
\begin{equation*}
\begin{split}
\lambda\leq \lambda_1\leq &\frac{\int_\Omega\mid\nabla u_\lambda\mid^q~dx}{\int_{\Omega}a\mid
 u_\lambda \mid^q~dx+\int_{\partial\Omega}b\mid u_\lambda\mid^q~d\sigma}\\
&=\frac{\lambda\Big(\int_{\Omega}a\mid u_\lambda \mid^q~dx+\int_{\partial\Omega}b\mid u_\lambda\mid^q~d\sigma\Big)-
\int_\Omega\mid \nabla u_\lambda\mid^p~dx}{\int_{\Omega}a\mid u_\lambda \mid^q~dx+\int_{\partial\Omega}b\mid u_\lambda\mid^q~d\sigma}\\
&= \lambda-\frac{\int_\Omega\mid\nabla u_\lambda\mid^p~dx}{\int_{\Omega}a\mid u_\lambda \mid^q~dx+
\int_{\partial\Omega}b\mid u_\lambda\mid^q~d\sigma}< \lambda,
\end{split}
\end{equation*}
which is a contradiction.

In what follows we shall prove that every $\lambda>\lambda_1$ is an eigenvalue of problem \eqref{eq:1.1}. To this purpose we fix such a $\lambda$ and define $\mathcal{J}_\lambda: W\rightarrow \mathbb{R},$
\begin{equation}\label{eq:3.2}
\mathcal{J}_\lambda(u)=\frac{1}{p}\int_\Omega \mid \nabla u\mid ^{p}~dx+\frac{1}{q}\int_\Omega\mid \nabla u\mid ^{q}~dx-
\frac{\lambda}{q}\Big(\int_\Omega a\mid  u\mid ^{q}~dx+\int_{\partial\Omega} b \mid  u_\lambda\mid ^{q} ~d\sigma\Big),
\end{equation}
which is a $C^1$ functional whose derivative is given by
\begin{equation}\label{eq:3.3}
\begin{split}
\langle\mathcal{J}'_\lambda(u),w\rangle&=\int_\Omega \mid \nabla u\mid ^{p-2}\nabla u\cdot\nabla w~dx+
\int_\Omega\mid \nabla u\mid ^{q-2}\nabla u\cdot\nabla w~dx\\
&-\lambda\Big(\int_\Omega a\mid  u\mid ^{q-2}uw~dx+\int_{\partial\Omega} b \mid  u_\lambda\mid ^{q-2}uw ~d\sigma\Big)\ \
\forall u, w\in W.
\end{split}
\end{equation}
So, according to Definition~\ref{def1}, $\lambda>\lambda_1$ is an eigenvalue of problem \eqref{eq:1.1} if and only if there exists
a critical point $u_\lambda\in W\setminus\{0\}$ of $\mathcal{J}_\lambda$, i. e.  $\mathcal{J}'_\lambda(u_\lambda)=0$.

The proof of Theorem~\ref{teorema1} will follow as a consequence of several intermediate results.
We shall discuss two cases which are complementary to each other.

\textbf{Case 1: $q\in (1, \infty),~p>q$}.

In this case we have $W=W^{1,p}(\Omega).$ The following lemma shows, essentially, that the functional defined in \eqref{eq:3.1}
is coercive for every $\lambda >\lambda_1$ restricted to the subset $\mathcal{C}\subset W=W^{1,p}(\Omega).$

\begin{lemma}\label{lema3.1}
 Let $q\in (1, \infty),~p>q.$ For every $\lambda >\lambda_1$, we have
\[
\underset{\parallel u\parallel_{W^{1,p}(\Omega)}\rightarrow\infty, u\in\mathcal{C}}{\lim}\mathcal{J}_\lambda(u)=\infty.
\]
\end{lemma}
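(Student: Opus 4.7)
The approach is a contradiction argument via normalization and weak compactness. Suppose toward contradiction that there exists a sequence $(u_n)\subset\mathcal{C}$ with $t_n:=\|u_n\|_{W^{1,p}(\Omega)}\to\infty$ while $\mathcal{J}_\lambda(u_n)$ stays bounded above. Set $v_n:=u_n/t_n$, so $\|v_n\|_{W^{1,p}(\Omega)}=1$ and, since $\mathcal{C}$ is a cone, $v_n\in\mathcal{C}$. Up to a subsequence, $v_n\rightharpoonup v$ in $W=W^{1,p}(\Omega)$; compactness of $W^{1,p}(\Omega)\hookrightarrow L^p(\Omega),L^q(\Omega)$ and of the trace $W^{1,p}(\Omega)\hookrightarrow L^q(\partial\Omega)$ gives strong convergence in $L^p(\Omega)$, $L^q(\Omega)$, and $L^q(\partial\Omega)$. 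Arguing exactly as in Section~2 for the weak closedness of $\mathcal{C}_q$ (now in $W^{1,p}$), one obtains $v\in\mathcal{C}$. Moreover, $v$ cannot be a nonzero constant, for $v\equiv c\neq 0$ in $\mathcal{C}$ would force $|c|^{q-2}c\bigl(\int_\Omega a\,dx+\int_{\partial\Omega} b\,d\sigma\bigr)=0$, contradicting \eqref{eq:1.2}.

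I would then split into two cases according to whether $v\neq 0$ or $v=0$. If $v\neq 0$, then $v$ is non-constant, so $\|\nabla v\|_{L^p(\Omega)}>0$, and weak lower semicontinuity gives $\liminf_n\|\nabla v_n\|_{L^p(\Omega)}^p\geq\|\nabla v\|_{L^p(\Omega)}^p>0$. Substituting $u_n=t_n v_n$ into $\mathcal{J}_\lambda$, the leading contribution $\tfrac{t_n^p}{p}\|\nabla v_n\|_{L^p(\Omega)}^p$ is of order $t_n^p$ with a positive liminf of its coefficient, whereas all remaining ($q$-gradient and $L^q$) terms are at most $Ct_n^q$ thanks to the $W^{1,p}$-boundedness of $v_n$ and the embeddings. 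Since $p>q$, this forces $\mathcal{J}_\lambda(u_n)\to+\infty$. If $v=0$, compactness gives $\|v_n\|_{L^p(\Omega)}\to 0$, hence $\|\nabla v_n\|_{L^p(\Omega)}^p\to 1$, and also $\|v_n\|_{L^q(\Omega)},\|v_n\|_{L^q(\partial\Omega)}\to 0$. One then estimates $\mathcal{J}_\lambda(u_n)\geq \tfrac{t_n^p}{p}(1+o(1))-o(1)\cdot t_n^q$, and again $p>q$ yields $\mathcal{J}_\lambda(u_n)\to+\infty$. Both cases contradict the boundedness assumption.

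The main obstacle I expect is the case $v=0$: a priori the negative contribution $\tfrac{\lambda t_n^q}{q}\bigl(\int_\Omega a|v_n|^q\,dx+\int_{\partial\Omega} b|v_n|^q\,d\sigma\bigr)$ is only of order $t_n^q$, which threatens to compete with the $p$-Dirichlet term of order $t_n^p$ when $q$ is close to $p$. The rescue is that in the case $v=0$ the normalization combined with the compact embeddings forces the $L^q$-masses of $v_n$ over $\Omega$ and $\partial\Omega$ to genuinely tend to zero, so the negative part is in fact $o(t_n^q)$ and is easily dominated by the $t_n^p$ leading term. Note that the hypothesis $\lambda>\lambda_1$ plays no role in the estimate itself: this argument actually yields coercivity of $\mathcal{J}_\lambda$ on $\mathcal{C}$ for every $\lambda>0$, driven purely by the homogeneity gap $p>q$ together with $(h_{ab})$, which rules out nonzero constants from $\mathcal{C}$.
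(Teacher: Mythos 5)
Your proof is correct, and it is essentially the same normalization-plus-compactness contradiction argument used in \cite[Case 1]{BM}, to which the paper defers for this lemma: rescale to $v_n=u_n/\|u_n\|_{W^{1,p}(\Omega)}$, pass to a weak limit $v\in\mathcal{C}$ via the compact embeddings, rule out nonzero constants by \eqref{eq:1.2}, and conclude that the $t_n^p$-order Dirichlet term dominates the $O(t_n^q)$ remainder. Your closing observation is also accurate: the hypothesis $\lambda>\lambda_1$ is not needed here, as the homogeneity gap $p>q$ alone yields coercivity of $\mathcal{J}_\lambda$ on $\mathcal{C}$ for every $\lambda$.
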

For the proof of this lemma we refer the reader to L. Barbu and G. Moro\c{s}anu \cite[Case 1]{BM}.

\begin{lemma}\label{lemma3.2}
 Let $q\in (1, \infty),~p>q.$ Every number $\lambda\in (\lambda_1, \infty)$ is an eigenvalue of
problem \eqref{eq:1.1}.
\end{lemma}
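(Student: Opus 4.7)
The plan is to realize any $\lambda > \lambda_1$ as an eigenvalue by producing a nontrivial minimizer $u_\lambda$ of $\mathcal{J}_\lambda$ on the cone $\mathcal{C}$, then showing this constrained minimizer is in fact a free critical point of $\mathcal{J}_\lambda$ on $W = W^{1,p}(\Omega)$. Lemma~\ref{lema3.1} provides coercivity of $\mathcal{J}_\lambda|_{\mathcal{C}}$; on the whole space $W$ the functional is sequentially weakly lower semicontinuous (the $p$- and $q$-Dirichlet parts are convex and weakly lsc, and the zero-order terms pass to the limit by the compact embeddings $W \hookrightarrow L^q(\Omega)$, $W \hookrightarrow L^q(\partial\Omega)$); and $\mathcal{C}$ is weakly closed in $W$. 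The direct method then yields a minimizer $u_\lambda \in \mathcal{C}$.

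To ensure $u_\lambda \neq 0$, I would use that $\lambda > \lambda_1$ combined with $p > q$. By \eqref{eq:1.6} there is some $w \in \mathcal{C}\setminus\{0\}$ with $\int_\Omega |\nabla w|^q\,dx < \lambda\big(\int_\Omega a|w|^q\,dx + \int_{\partial\Omega} b|w|^q\,d\sigma\big)$. Since $\mathcal{C}$ is a cone, $tw \in \mathcal{C}$ for every $t>0$, and
\[
\mathcal{J}_\lambda(tw) = \frac{t^p}{p}\int_\Omega |\nabla w|^p\,dx + \frac{t^q}{q}\Big[\int_\Omega |\nabla w|^q\,dx - \lambda\Big(\int_\Omega a|w|^q\,dx + \int_{\partial\Omega} b|w|^q\,d\sigma\Big)\Big].
\]
Because $p > q$, the negative $t^q$-bracket dominates as $t \to 0^+$, so $\mathcal{J}_\lambda(tw) < 0$ for small $t$; hence $\mathcal{J}_\lambda(u_\lambda) \le \mathcal{J}_\lambda(tw) < 0 = \mathcal{J}_\lambda(0)$, forcing $u_\lambda \not\equiv 0$.

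The hard step, and the crux of the argument, is to upgrade $u_\lambda$ from a constrained minimizer on $\mathcal{C}$ to a critical point of $\mathcal{J}_\lambda$ on the full space $W$: when $q \in (1,2)$ the cone $\mathcal{C}$ fails to be a $C^1$ manifold, so no Lagrange multiplier rule applies. I would bypass this exactly as in the proof of Lemma~\ref{lema1}, using a Lipschitz-perturbation device. Fix $v \in \mbox{Lip}(\Omega)$ and define $g_n(s) = \mathcal{I}(u_\lambda + v/n + s)$ together with the coercive, convex, $C^1$ primitive $h_n$ as in \eqref{eq:2.8}--\eqref{eq:2.9} to find $s_n \in \mathbb{R}$ with $g_n(s_n) = 0$, i.e.\ $u_n := u_\lambda + v/n + s_n \in \mathcal{C}$. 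The same sign-case contradiction used in Lemma~\ref{lema1} shows $(ns_n)_n$ is bounded; extracting a subsequence with $ns_n \to S$ yields $u_n \to u_\lambda$ in $W^{1,p}(\Omega)$ and $n(u_n - u_\lambda) \to v + S$.

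Combining the minimality $\mathcal{J}_\lambda(u_n) \ge \mathcal{J}_\lambda(u_\lambda)$ with Fr\'echet differentiability of $\mathcal{J}_\lambda$ then gives
\[
0 \le \lim_{n \to \infty} n\big(\mathcal{J}_\lambda(u_n) - \mathcal{J}_\lambda(u_\lambda)\big) = \langle \mathcal{J}'_\lambda(u_\lambda),\, v + S\rangle.
\]
Testing \eqref{eq:3.3} with the constant function $1$ and using $u_\lambda \in \mathcal{C}$ kills the $S$-contribution: $\langle \mathcal{J}'_\lambda(u_\lambda), 1\rangle = 0$. Thus $\langle \mathcal{J}'_\lambda(u_\lambda), v\rangle \ge 0$; repeating the argument with $-v$ in place of $v$ gives the reverse inequality, so $\langle \mathcal{J}'_\lambda(u_\lambda), v\rangle = 0$ for every $v \in \mbox{Lip}(\Omega)$. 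Density of Lipschitz functions in $W^{1,p}(\Omega)$ (thanks to the smoothness of $\partial\Omega$) finishes the proof that $\mathcal{J}'_\lambda(u_\lambda) = 0$, i.e.\ $u_\lambda$ is an eigenfunction for $\lambda$. I expect the single most delicate verification to be the boundedness of $(ns_n)_n$: the sign-case analysis borrowed from Lemma~\ref{lema1} hinges on the strict monotonicity of $g_n$ and on a contradiction with the defining identity of $\mathcal{C}$, and must be reconfirmed carefully in this $(p,q)$-setting.
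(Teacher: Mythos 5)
Your proposal is correct and follows essentially the same route as the paper: direct method on the weakly closed cone $\mathcal{C}$ using the coercivity from Lemma~\ref{lema3.1}, nontriviality of the minimizer via the scaling $t\mapsto \mathcal{J}_\lambda(tw)$ with $t\to 0^+$ (which is precisely the computation behind the paper's identity $\lambda_1=\widetilde{\lambda}_1$ in Remark~\ref{remarca}), and the Lipschitz-perturbation device of Lemma~\ref{lema1} to replace the unavailable Lagrange multiplier rule, including the correct observation that $\langle\mathcal{J}'_\lambda(u_\lambda),1\rangle=0$ because $u_\lambda\in\mathcal{C}$. No gaps.
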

\begin{proof}
Note that $\mathcal{C}$ is a weakly closed subset of the reflexive Banach space $W=W^{1,p}(\Omega),$ and functional $\mathcal{J}_\lambda$ is coercive (see Lemma~\ref{lema3.1}) and weakly lower semicontinuous on $\mathcal{C}$ with respect to the norm of $W^{1,p}(\Omega).$
Standard results in the calculus of variations (see, e.g., \cite [Theorem 1.2] {St}) ensures the existence of a global minimizer $z_{*}\in \mathcal{C}$ for $\mathcal{J}_\lambda$, i.e., $\mathcal{J}_\lambda(z_{*})=\min_{\mathcal{C}}\mathcal{J}_\lambda$.

From Remark \ref{remarca} we know that $\lambda_1=\widetilde{\lambda}_1$, hence
 $\lambda>\lambda_1=\widetilde{\lambda}_1$. Then (by \eqref{eq:2.16}) there exists $u_{0\lambda}\in \mathcal{C}\setminus\{0\}$ such that
$\mathcal{J}_\lambda(u_{0\lambda})<0.$ It follows that
\[
\mathcal{J}_\lambda(z_{*})\leq \mathcal{J}_\lambda(u_{0\lambda})<0,
\]
which shows that $z_{*}\neq 0.$

Next, we are going to show that the global minimizer $z_*$ for $\mathcal{J}_\lambda$ restricted to $\mathcal{C}$ is a critical point of $\mathcal{J}_\lambda$ considered on the whole space $W^{1,p}(\Omega),$ i. e., $\mathcal{J}_\lambda^\prime(z_*)=0,$ in other words, $z_*$ is an eigenfunction of problem \eqref{eq:1.1} corresponding to $\lambda.$

In fact, $z_{*}$ is a solution of the minimization problem
\[
\min_{w\in W} \mathcal{J}_\lambda(w),
\]
under the restriction
\[
g(w):=\int_\Omega a\mid  w\mid ^{q-2} w  ~dx+\int_{\partial\Omega} b \mid  w\mid ^{q-2} w  ~d\sigma=0.
\]
If $q\in [2, \infty),~p>q,$ we have proved in \cite[Case 1]{BM}, by using the Lagrange Multipliers Rule, that $\mathcal{J}_\lambda(z_*)=0$. For $q \in (1,2)$, $g$ is no longer  a $C^1$ function on $W$, so we cannot use  the same reasoning  to prove our assertion. Fortunately, we can use a technique similar to that used in the proof of Lemma~\ref{lema1}. It is worth mentioning that this technique works for the case $q\in [2, \infty ),$ too.

Since $p>q$, the inclusions $W^{1,p}(\Omega)\hookrightarrow L^q(\Omega)$ and  $W^{1,p}(\Omega)\hookrightarrow L^q(\partial\Omega)$ are compact. As in the proof of Lemma~\ref{lema1}, let us fix an arbitrary $v\in \mbox{Lip}(\Omega)$ and construct the sequence
\begin{equation}\label{eq:3.4}
u_n:=z_*+\frac{1}{n}v+s_n~\forall~n\in \mathbb{N}^*,
\end{equation}
such that $\big(u_n\big)_n \subset \mathcal{C}.$

Similar arguments as in the proof of Lemma~\ref{lema1} can be used in order to prove that the sequence $\big(n s_n\big)_n$ is also bounded, hence it converges on a subsequence to  some $S\in \mathbb{R}$ and so, on a subsequence,
\begin{equation}\label{eq:3.5}
n\big(u_n- z_*\big)\rightarrow v+S ~\mbox{and}~ u_n\rightarrow z_*~\mbox{in}~W^{1,p}(\Omega)~\mbox{as}~n\rightarrow \infty.
\end{equation}
Since $z_{*}$ minimizes functional $\mathcal{J}_\lambda$ over $ \mathcal{C}$ and $\big(u_{\lambda n}\big)_n \subset \mathcal{C}$, we have
\begin{equation}\label{eq:3.7}
0\leq \underset{n\rightarrow\infty}{\lim}  \frac{\mathcal{J}_\lambda(u_{ n})-\mathcal{J}_\lambda(z_{*})}{\frac{1}{n}}.
\end{equation}
We also have
\begin{equation}\label{eq:3.8}
 n\big(\mathcal{J}_\lambda(u_{ n})-\mathcal{J}_\lambda(z_{*})\big)=\langle \mathcal{{J}}^{\prime}_\lambda(z_{*}), n(u_{n}-z_{*})\rangle+o(n; z_{\lambda*}, v),
\end{equation}
with $o(n; z_{*}, v)\rightarrow 0$ as $n\rightarrow \infty$.
From  \eqref{eq:3.5}-\eqref{eq:3.8}, combined with $z_{*}\in \mathcal{C},$  we get
\begin{equation}\label{eq:3.9}
\begin{split}
0\leq&\underset{n\rightarrow \infty}{\lim} n\big(\mathcal{J}_\lambda(u_{n})-\mathcal{J}_\lambda(z_{*})\big)=\underset{n\rightarrow \infty}{\lim}\langle \mathcal{J}_\lambda^{\prime}(z_{*}), n(u_{n}-z_{*})\rangle+o(n; z_{*}, v)\\
&=\langle \mathcal{J}_\lambda^{\prime}(z_{*}), v+S\rangle=\langle \mathcal{J}_\lambda^{\prime}(z_{*}), v\rangle.
\end{split}
\end{equation}
A similar reasoning with $-v$ instead of $v$ and the density of Lipschitz functions in $W^{1,p}(\Omega)$ yield  $\mathcal{J}_\lambda^\prime (z_{*})=0$, which concludes the proof.
\end{proof}

\textbf{Case 2: $q\in (1,\infty)$, $p<q$}.

In this case $W=W^{1,q}(\Omega)$ and $\mathcal{C}=\mathcal{C}_q.$ Let $\lambda>\lambda_1$ be a fixed number. Under the assumption $p<q$ we cannot expect coercivity on $W^{1,q}(\Omega)$ of the functional $\mathcal{J}_\lambda$.  From now on we analyse the action of $\mathcal{J}_\lambda$ on the Nehari type manifold (see \cite{SW}) defined by
$$
\mathcal{N}_\lambda=\{v\in \mathcal{C}\setminus\{0\}; \langle \mathcal{J}'_\lambda(v),v\rangle=0\}
$$
$$
=\Big\{v\in \mathcal{C}\setminus\{0\}; \int_\Omega \big(\mid \nabla v\mid ^{p}+\mid \nabla v\mid ^{q}\big)~dx
=\lambda\Big(\int_\Omega a\mid  v\mid ^{q} ~dx+\int_{\partial\Omega} b \mid  v\mid ^{q} ~d\sigma\Big)\Big\}.
$$
It is natural to consider the restriction of $\mathcal{J}_\lambda$ to $\mathcal{N}_\lambda$ since any possible eigenfunction corresponding to $\lambda$ belongs to $\mathcal{N}_\lambda$. Note that
on $\mathcal{N}_\lambda$ functional $\mathcal{J}_\lambda$ has the form
\begin{equation*}
\begin{split}
\mathcal{J}_\lambda(u)=&\frac{1}{p}\int_\Omega \mid \nabla u\mid ^{p}~dx+\frac{1}{q}\int_\Omega\mid \nabla u\mid ^{q}~dx-\frac{\lambda}{q}\Big(\int_\Omega a\mid  u\mid ^{q}~dx+\int_{\partial\Omega} b \mid  u\mid ^{q} ~d\sigma\Big)\\
=&\frac{1}{p}\int_\Omega \mid \nabla u\mid ^{p}~dx-\frac{1}{q}\int_\Omega\mid \nabla u\mid ^{p}~dx=\frac{q-p}{qp}\int_\Omega \mid \nabla u\mid ^{p}~dx>0.
\end{split}
\end{equation*}
Now, let us recall the following result from L. Barbu and G. Moro\d{s}anu \cite[Case 2, Steps 1-4]{BM}.
 \begin{lemma}\label{lemma3.3}
 Let $q\in (1, \infty),~p<q.$ Then there exists a point $u_{*}\in \mathcal{N}_\lambda$ where $\mathcal{J}_\lambda$ attains its minimal value, $m_\lambda:= \underset{w\in\mathcal{{N}}_\lambda}{\inf }{\mathcal{{J}}_\lambda (w) }>0.$
 \end{lemma}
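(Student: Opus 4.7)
The plan is to establish the lemma via the standard Nehari-manifold method, carried out in four stages, and to handle the subcritical setup $p<q$ by exploiting the special form of $\mathcal{J}_\lambda$ on $\mathcal{N}_\lambda$. To shorten notation, write $A(v)=\int_\Omega|\nabla v|^p\,dx$, $B(v)=\int_\Omega|\nabla v|^q\,dx$, and $C(v)=\int_\Omega a|v|^q\,dx+\int_{\partial\Omega}b|v|^q\,d\sigma$, so that $\mathcal{N}_\lambda=\{v\in\mathcal{C}\setminus\{0\}:A(v)+B(v)=\lambda C(v)\}$ and $\mathcal{J}_\lambda(v)=\frac{q-p}{pq}A(v)$ for $v\in\mathcal{N}_\lambda$.

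First, I would verify $\mathcal{N}_\lambda\ne\emptyset$: given any $v\in\mathcal{C}\setminus\{0\}$ with $B(v)<\lambda C(v)$, the equation $t^{p-q}A(v)=\lambda C(v)-B(v)$ has a unique positive solution $t(v)$ because $p-q<0$, and then $t(v)v\in\mathcal{N}_\lambda$. Such $v$ exist because $\lambda>\lambda_1$, so one can pick $v$ whose Rayleigh quotient lies strictly between $\lambda_1$ and $\lambda$. Second, I would show $m_\lambda>0$. Since $\mathcal{J}_\lambda|_{\mathcal{N}_\lambda}=\tfrac{q-p}{pq}A$, clearly $m_\lambda\ge 0$. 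Suppose for contradiction that $m_\lambda=0$ and pick $(u_n)\subset\mathcal{N}_\lambda$ with $A(u_n)\to 0$; then $B(u_n)=\lambda C(u_n)-A(u_n)$, so combining with $B(u_n)\ge\lambda_1 C(u_n)$ (from the definition of $\lambda_1$ and $u_n\in\mathcal{C}$), and passing to a suitably rescaled subsequence $v_n=u_n/C(u_n)^{1/q}\in\mathcal{C}$, one shows $v_n$ is bounded in $W^{1,q}(\Omega)$, extracts a weak limit $v_*\in\mathcal{C}$ with $C(v_*)=1$, and shows $B(v_*)\le\lambda_1$; but the $A(u_n)\to 0$ hypothesis forces $B(v_n)\to\lambda$, contradicting $\lambda>\lambda_1$.

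Third, for a minimizing sequence $(u_n)\subset\mathcal{N}_\lambda$ with $\mathcal{J}_\lambda(u_n)\to m_\lambda$, I would establish boundedness in $W^{1,q}(\Omega)$ by a rescaling argument of the same flavor as in Lemma~\ref{lema1}: assuming $\|u_n\|_{W^{1,q}}\to\infty$ and normalizing, the compact embeddings and weak closedness of $\mathcal{C}$ yield a nonzero constant in $\mathcal{C}$, which is impossible under $(h_{ab})$. Fourth, I would pass to the limit: extract $u_n\rightharpoonup u_*$ in $W^{1,q}(\Omega)$ with strong convergence in $L^q(\Omega)$ and $L^q(\partial\Omega)$, so $u_*\in\mathcal{C}$ and $C(u_n)\to C(u_*)$. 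By weak lower semicontinuity, $A(u_*)+B(u_*)\le\lambda C(u_*)$; if this were strict, then (since $p<q$) there would exist $t_*\in(0,1)$ with $t_*u_*\in\mathcal{N}_\lambda$ and $\mathcal{J}_\lambda(t_*u_*)=\tfrac{q-p}{pq}t_*^pA(u_*)<\tfrac{q-p}{pq}A(u_*)\le m_\lambda$, contradicting the definition of $m_\lambda$. Hence equality forces $u_*\in\mathcal{N}_\lambda$ and $A(u_*)=\lim A(u_n)$, so $\mathcal{J}_\lambda(u_*)=m_\lambda$.

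The main obstacle is the final step: because the operator is nonhomogeneous, the Nehari constraint is not automatically preserved under weak limits, and one must show via the rescaling/contradiction argument above that the inequality coming from weak lower semicontinuity is in fact an equality. Relatedly, the positivity $m_\lambda>0$ is delicate because $\mathcal{J}_\lambda$ on $\mathcal{N}_\lambda$ only controls $A(v)=\int|\nabla v|^p\,dx$, so one must couple the Nehari identity with the Rayleigh bound associated to $\lambda_1$ rather than directly estimate from below.
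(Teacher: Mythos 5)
The paper does not actually prove this lemma: it is imported wholesale from \cite[Case 2, Steps 1--4]{BM}, so there is no in-paper argument to compare against. Your overall plan (nonemptiness of $\mathcal{N}_\lambda$ by fibering, positivity of $m_\lambda$, boundedness of a minimizing sequence, weak passage to the limit with a projection back onto $\mathcal{N}_\lambda$ to restore the constraint) is the standard and correct route, and your Steps 1 and 4 are essentially sound; in Step 4 you should also record that $u_*\neq 0$, which follows from $m_\lambda>0$ (if $u_*=0$ then $C(u_n)\to 0$, hence $A(u_n)+B(u_n)=\lambda C(u_n)\to 0$ and $\mathcal{J}_\lambda(u_n)\to 0$, contradicting Step 2).

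The genuine problem is Step 2 as written: the claimed contradiction does not exist. From $A(u_n)\to 0$ you get $B(v_n)=\lambda-A(u_n)/C(u_n)$, and lower semicontinuity only yields $\lambda_1\le B(v_*)\le\lambda$, which is perfectly consistent with $\lambda>\lambda_1$; the assertion ``$B(v_*)\le\lambda_1$'' has no justification. The step can be repaired by tracking $A$ rather than $B$: combining the Nehari identity with $B(u_n)\ge\lambda_1 C(u_n)$ gives $A(u_n)\le(\lambda-\lambda_1)C(u_n)$, hence $A(v_n)=A(u_n)/C(u_n)^{p/q}\le(\lambda-\lambda_1)C(u_n)^{(q-p)/q}$; splitting into the cases $C(u_n)\to 0$ and $C(u_n)$ bounded away from zero, in both one gets $A(v_n)\to 0$, so by weak lower semicontinuity $A(v_*)=0$, $v_*$ is constant, hence $v_*=0$ because nonzero constants are excluded from $\mathcal{C}$ by $(h_{ab})$ --- contradicting $C(v_*)=1$. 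A similar thinness affects Step 3: for a minimizing sequence only $A(u_n)$ is a priori controlled, so after normalizing you only get $\int_\Omega|\nabla w_n|^p\,dx\to 0$, whence the weak limit $w$ is constant; the branch $w=0$ is not excluded by ``a nonzero constant in $\mathcal{C}$'' and must be handled separately, e.g.\ by using the Nehari identity to show $\int_\Omega|\nabla w_n|^q\,dx=\lambda C(w_n)-A(u_n)\|u_n\|^{-q}_{W^{1,q}(\Omega)}\to 0$ and hence $\|w_n\|_{W^{1,q}(\Omega)}\to 0$, contradicting the normalization $\|w_n\|_{W^{1,q}(\Omega)}=1$.
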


In the sequel we show that the minimizer $u_*$, given by Lemma~\ref{lemma3.3}, is a critical point of $\mathcal{J}_\lambda$ considered on the whole space $W^{1,q}(\Omega)$.
 \begin{lemma}\label{lemma3.4}
 Let $q \in (1,\infty),~p<q.$ The minimizer $u_{*}\in \mathcal{N}_\lambda$ from Lemma~\ref{lemma3.3} is an eigenfunction of problem \eqref{eq:1.1} with corresponding eigenvalue $\lambda.$
 \end{lemma}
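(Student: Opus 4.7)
My plan is to adapt the Lipschitz-test-function method used in Lemma~\ref{lema1} and Lemma~\ref{lemma3.2}, with an additional scaling step to enforce the Nehari constraint. The goal is to prove that $\langle \mathcal{J}'_\lambda(u_*),v\rangle=0$ for every $v\in \mbox{Lip}(\Omega)$; density of $\mbox{Lip}(\Omega)$ in $W^{1,q}(\Omega)$, valid because $\partial\Omega$ is smooth, then gives $\mathcal{J}'_\lambda(u_*)=0$, i.e.\ $u_*$ is an eigenfunction of \eqref{eq:1.1} corresponding to $\lambda$. The new obstacle compared with the previous lemmas is that $u_*$ is only constrained to $\mathcal{N}_\lambda=\mathcal{C}\cap\{\langle \mathcal{J}'_\lambda(\cdot),\cdot\rangle=0\}$, and the Nehari part of this constraint is not $C^1$ along the $\mathcal{C}$-perturbations produced in Lemma~\ref{lema1}; landing in $\mathcal{C}$ alone is therefore not enough.

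Fix $v\in \mbox{Lip}(\Omega)$. Exactly as in Lemma~\ref{lema1}, choose $s_n\in\mathbb{R}$ so that $u_n:=u_*+\frac{1}{n}v+s_n\in\mathcal{C}$, using the strict monotonicity of the function defining the $\mathcal{C}$-constraint, and argue by the same contradiction that $(ns_n)$ is bounded; hence, along a subsequence, $ns_n\to S$, $u_n\to u_*$ in $W^{1,q}(\Omega)$, and $n(u_n-u_*)\to v+S$. Now project $u_n$ onto $\mathcal{N}_\lambda$: since $\mathcal{C}$ is a cone, $tu_n\in\mathcal{C}$ for every $t>0$, and $\langle \mathcal{J}'_\lambda(tu_n),tu_n\rangle=0$ reduces to
\begin{equation*}
t^{p-q}\int_\Omega|\nabla u_n|^p\,dx=\lambda\Big(\int_\Omega a|u_n|^q\,dx+\int_{\partial\Omega}b|u_n|^q\,d\sigma\Big)-\int_\Omega|\nabla u_n|^q\,dx.
\end{equation*}
Since $p<q$ and the right-hand side converges, by the Nehari identity for $u_*$, to $\int_\Omega|\nabla u_*|^p\,dx>0$, for all $n$ large there is a unique $t_n>0$ solving this, and $t_n\to 1$.

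The technical crux is to upgrade this to the $O(1/n)$ statement that $n(t_n-1)$ is bounded. This should follow by subtracting the Nehari identity for $u_*$ from the one for $u_n$, performing a first-order Taylor expansion of the four integrals at $u_*$ along the direction $n(u_n-u_*)\to v+S$, and dividing by the nonzero factor $(p-q)\int_\Omega|\nabla u_*|^p\,dx$ coming from differentiating $t\mapsto t^{p-q}$ at $t=1$. Extracting a further subsequence gives $n(t_n-1)\to\tau$ for some $\tau\in\mathbb{R}$, so that $w_n:=t_nu_n\in\mathcal{N}_\lambda$ satisfies $n(w_n-u_*)\to v+S+\tau u_*$ in $W^{1,q}(\Omega)$. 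The minimality of $u_*$ on $\mathcal{N}_\lambda$ from Lemma~\ref{lemma3.3} together with the $C^1$-smoothness of $\mathcal{J}_\lambda$ then yields
\begin{equation*}
0\le \lim_{n\to\infty} n\big(\mathcal{J}_\lambda(w_n)-\mathcal{J}_\lambda(u_*)\big)=\langle \mathcal{J}'_\lambda(u_*),v+S+\tau u_*\rangle.
\end{equation*}

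The argument is then closed by observing that the two spurious directions drop out of this pairing: $\langle \mathcal{J}'_\lambda(u_*),u_*\rangle=0$ because $u_*\in\mathcal{N}_\lambda$, and $\langle \mathcal{J}'_\lambda(u_*),1\rangle=-\lambda\big(\int_\Omega a|u_*|^{q-2}u_*\,dx+\int_{\partial\Omega}b|u_*|^{q-2}u_*\,d\sigma\big)=0$ because $u_*\in\mathcal{C}$. Hence $\langle \mathcal{J}'_\lambda(u_*),v\rangle\ge 0$, and rerunning the construction with $-v$ in place of $v$ gives the reverse inequality, so $\langle \mathcal{J}'_\lambda(u_*),v\rangle=0$ for every $v\in\mbox{Lip}(\Omega)$; density concludes. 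The main obstacle is the asymptotic analysis of the Nehari rescaling $t_n$: the bare fact $t_n\to 1$ is not enough, and the whole argument hinges on the boundedness of $n(t_n-1)$, which in turn relies on the fact that the Nehari condition makes the two sides of the Nehari equation tangent to first order at $u_*$, so that one divides by the coefficient arising from the $t-1$ expansion rather than from the $t^{p-q}-1$ expansion.
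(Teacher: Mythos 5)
Your proposal is correct and follows essentially the same route as the paper's proof: perturb within $\mathcal{C}$ by $\frac{1}{n}v+s_n$, rescale by $t_n$ onto $\mathcal{N}_\lambda$, show $n(t_n-1)$ is bounded via the first-order expansion of the $C^1$ Nehari defect functional, pass to the limit in the minimality inequality, and kill the directions $u_*$ and the constant $S$ using $u_*\in\mathcal{N}_\lambda$ and $u_*\in\mathcal{C}$ respectively. The only cosmetic difference is that the paper first bounds $n(t_n^{p-q}-1)$ and then transfers this to $n(t_n-1)$ by a squeeze with $\bigl(1\pm K/n\bigr)^{1/(p-q)}$, whereas you divide directly by the factor $(p-q)\int_\Omega|\nabla u_*|^p\,dx$; both are equivalent.
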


\begin{proof} It suffices to prove that $\mathcal{J}'_\lambda( u_*)=0.$

In fact $u_*$  is a minimizer of $J_{\lambda}$ for $w\in W$ subject to
the restrictions
\begin{equation}\label{eq:3.10}
g_1(w):=\int_\Omega \big(\mid \nabla w\mid ^{p}+\mid \nabla w\mid ^{q}\big)~dx
-\lambda\Big(\int_\Omega a\mid  w\mid ^{q} ~dx+\int_{\partial\Omega} b \mid w\mid ^{q} ~d\sigma\Big)=0,
%\tag{3.15}
\end{equation}
\begin{equation}\label{eq:3.11}
g_2(w):=\int_\Omega a\mid  w\mid ^{q-2} w  ~dx+\int_{\partial\Omega} b \mid  w\mid ^{q-2} w  ~d\sigma=0.
%\tag{3.16}
\end{equation}

In the case $q\in [2, \infty),~ p<q,$ the conclusion was proved in L. Barbu and G. Morosanu \cite[Step 5]{BM}, by using the Lagrange Multipliers Rule. If $q\in (1,2)$ , the function  $g_2$ is not in $C^1(W;\mathbb{R})$, so the Lagrange Multipliers Rule is no longer applicable to this case. What we can do is to apply a reasoning similar to that used in the proofs of Lemmas~ \ref{lema1} and \ref{lemma3.2} to show that $\mathcal{J}'_\lambda( u_*)=0.$

So, let $v\in \mbox{Lip}(\Omega)$  be an arbitrary but fixed function. Let $u_*\in \mathcal{N}_\lambda $ be the minimizer of $\mathcal{J}_\lambda$ over $\mathcal{N}_\lambda$, and consider the sequence $\big(u_n\big)_n \subset W^{1,q}(\Omega),$
\begin{equation}\label{eq:3.12}
u_{n}:=u_{*}+\frac{1}{n}v+s_{n}~\forall~n\in \mathbb{N}^*,
\end{equation}
with $\big(u_{n}\big)_n \subset \mathcal{C}_q$. Again, the sequence $\big( n s_{n}\big)_n$ is bounded, so it converges on a subsequence to some $S\in \mathbb{R}$. Therefore, on a subsequence, we have
\begin{equation}\label{eq:3.13}
n\big(u_{n}- u_{ *}\big)\rightarrow v+S,~u_{n}\rightarrow u_*~\mbox{in}~W^{1,q}(\Omega)~\mbox{as}~n\rightarrow \infty.
\end{equation}
Since $u_*\not\equiv 0,$ one can assume that $\big(u_n\big)_n \subset \mathcal{C}_q \setminus \{0\}.$ Using this last subsequence of $\big(u_n\big)_n$, we shall construct a sequence $\big(t_n\big)_n \subset \mathbb{R}$ such that $\big(t_n u_n \big)_n\subset \mathcal{N}_\lambda,$ for every $n$ sufficiently large, i.e.,
\begin{equation}\label{eq:3.14}
\begin{split}
t_n^p\int_\Omega \mid \nabla u_n\mid ^{p}~dx+t_n^q\int_\Omega\mid \nabla u_n\mid ^{q}~dx
=\lambda t_n^q\Big(\int_\Omega a\mid  u_n\mid ^{q} ~dx+\int_{\partial\Omega} b \mid  u_n\mid ^{q} ~d\sigma\Big),
\end{split}
\end{equation}
or, equivalently,
\begin{equation}\label{eq:3.15}
t_n=\Biggl(\frac{\int_\Omega \mid \nabla u_n\mid ^{p}~dx }{\lambda \big(\int_{\Omega}a\mid u_n\mid^q~dx+\int_{\partial\Omega}b\mid u_n\mid^q~d\sigma\big)-\int_\Omega\mid\nabla u_n\mid^q~dx}\Biggr)^{1/(q-p)}.
\end{equation}
Note that for sufficiently large $n$, both the numerator and the denominator are positive numbers. Indeed, since  $u_*\in \mathcal{N}_\lambda,$ we have
\begin{equation}\label{eq:3.16}
\int_\Omega \mid \nabla u_*\mid ^{p}~dx>0~~\mbox{and}~~ \int_\Omega\mid \nabla u_*\mid ^{q}~dx<\lambda\Big(\int_\Omega a\mid  u_*\mid ^{q} ~dx+\int_{\partial\Omega} b \mid  u_*\mid ^{q} ~d\sigma\Big).
\end{equation}
Since the functionals
\begin{equation}\label{eq:3.17}
\begin{split}
\mathcal{I}_1, \mathcal{I}_2:&W\rightarrow\mathbb{R},~~~\mathcal{I}_1(w):=\int_\Omega \mid \nabla w\mid ^{p}~dx,\\
\mathcal{I}_2(w)&:=-\int_{\Omega}\mid \nabla w\mid ^{q}~dx+\lambda\Big(\int_\Omega a\mid  w\mid ^{q} ~dx+\int_{\partial\Omega} b \mid  w\mid ^{q} ~d\sigma\Big)~\forall~w\in W
\end{split}
\end{equation}
are continuous on $W$ and $\mathcal{I}_1(u_*)>0,~\mathcal{I}_2(u_*)>0,$ (see \eqref{eq:3.16}), there exists $\delta_0>0$ such that
$$
w\in W,~\parallel w-u_*\parallel_W<\delta_0 \ \Longrightarrow \mathcal{I}_1(w)>0, \ \mathcal{I}_2(w)>0.
$$
Since $u_n\rightarrow u_*$ in $W,$ it follows that for $N_0$  large enough, $\mathcal{I}_1(u_n)>0, \mathcal{I}_2(u_n)>0~\forall~n\geq N_0, $ hence $t_n$ given by \eqref{eq:3.15} is well
defined for $n\ge N_0$. So we can define
\begin{equation}\label{eq:3.18}
z_{n}:=t_n\Biggl(u_{*}+\frac{1}{n}v+s_{n}\Biggr)=t_n u_n~\forall~n\geq N_0,
\end{equation}
with $\big(z_{n}\big)_n \subset \mathcal{N}_\lambda.$
In addition, using \eqref{eq:3.15} and \eqref{eq:3.18}, we can see that
\begin{equation}\label{eq:3.19}
t_n\rightarrow 1~\mbox{in}~\mathbb{R}, ~z_{ n}\rightarrow u_*~\mbox{in}~W^{1,q}(\Omega)~ \mbox{as}~n\rightarrow \infty.
\end{equation}
In what follows we shall prove that the sequence  $\big(n(t_n-1)\big)_n$ is bounded. To this purpose, let us first show that  the sequence $\big(n(t_n^{p-q}-1)\big)_n$ is bounded. Define the functional $\mathcal{L}_\lambda:W\rightarrow\mathbb{R},$
\begin{equation}\label{eq:3.20}
\begin{split}
\mathcal{L}_\lambda(u)=&-\int_\Omega \mid \nabla u\mid ^{p}~dx-\int_\Omega\mid \nabla u\mid ^{q}~dx\\
&+\lambda\Big(\int_\Omega a\mid  u\mid ^{q}~dx+\int_{\partial\Omega} b \mid  u\mid ^{q} ~d\sigma\Big)~\forall~u \in W,
\end{split}
\end{equation}
which belongs to $C^1(W;\mathbb{R})$, and for  $u, w \in W$
\begin{equation}\label{eq:3.21}
\begin{split}
\langle\mathcal{L}'_\lambda(u),w\rangle&=-p\int_\Omega \mid \nabla u\mid ^{p-2}\nabla u\cdot\nabla w~dx-q
\int_\Omega\mid \nabla u\mid ^{q-2}\nabla u\cdot\nabla w~dx\\
&+\lambda q\Big(\int_\Omega a\mid  u\mid ^{q-2}uw~dx+\int_{\partial\Omega} b \mid  u_\lambda\mid ^{q-2}uw ~d\sigma\Big).
\end{split}
\end{equation}
From \eqref{eq:3.20} and $u_*\in \mathcal{N}_\lambda$, we infer that $\mathcal{L}_\lambda (u_*)=0,$ so we get
\begin{equation}\label{eq:3.22}
n(t_n^{p-q}-1)=\frac{n\big(\mathcal{L}_\lambda (u_n)-\mathcal{L}_\lambda (u_*)\big)}{\int_\Omega \mid \nabla u_n\mid ^{p}~dx}.
\end{equation}
Since $p<q$, we have
\begin{equation}\label{eq:3.23}
\int_\Omega \mid \nabla u_n\mid ^{p}~dx\rightarrow\int_\Omega \mid \nabla u_*\mid ^{p}~dx> 0,
\end{equation}
\begin{equation}\label{eq:3.24}
n\big(\mathcal{L}_\lambda (u_n)-\mathcal{L}_\lambda (u_*)\big)\rightarrow \langle \mathcal{L}'_\lambda(u_*), v+S\rangle~ \mbox{as}~n\rightarrow \infty.
\end{equation}
From \eqref{eq:3.22} and \eqref{eq:3.24} we deduce that the sequence $\big(n(t_n^{p-q}-1)\big)_n$ has a finite limit. Hence, there is $K>0$ such that for all $n\geq N_0,$ $n\mid t_n^{p-q}-1\mid \leq K,$ which implies
\[
1-\frac{K}{n}\leq t_n^{p-q}\leq 1+\frac{K}{n}~\forall ~n\geq N_0.
\]
Since, there exists $N_1\in \mathbb{N}^*$ such that $1-K/n >0~\forall~n\geq N_1,$ we have
\begin{equation}\label{eq:3.25}
n\Biggl(\Big(1+\frac{K}{n}\Big)^{\frac{1}{p-q}}-1\Biggr)\leq n(t_n-1)\leq n\Biggl(\Big(1-\frac{K}{n}\Big)^{\frac{1}{p-q}}-1\Biggr)~\forall ~n\geq \max \{N_0, N_1\}.
\end{equation}
Taking into account the relations
\[
\underset{x\rightarrow 0}{\lim}\frac{(1+Kx)^{1/(p-q)}-1}{x}=K/(p-q),~\underset{x\rightarrow 0}{\lim}\frac{(1-Kx)^{1/(p-q)}-1}{x}=-K/(p-q),~
\]
we infer from \eqref{eq:3.25} that the sequence $\big(n(t_n-1)\big)_n$ is bounded, thus, by possibly passing to a subsequence, there exists $T\in \mathbb{R}$, such that $n(t_n-1)\rightarrow T$ as $n\rightarrow \infty$.

 By using the minimality of $u_*$ and the fact that $\big(z_n\big)_n\subset \mathcal{N}_\lambda$ we obtain that
\begin{equation}\label{eq:3.26}
0\leq \underset{n\rightarrow\infty}{\lim}  \frac{\mathcal{J}_\lambda(z_n)-\mathcal{J}_\lambda(u_*)}{\frac{1}{n}}.
\end{equation}
Since functional $\mathcal{J}_\lambda\in C^1(W;\mathbb{R}),$  we can write
\begin{equation}\label{eq:3.27}
n\big(\mathcal{J}_\lambda(z_n)-\mathcal{J}_\lambda(u_*)\big)=\big(\langle \mathcal{J}_\lambda^{\prime}(u_*), n(z_n-u_*)\rangle+o(n; u_*, v),
\end{equation}
with $o(n; u_*, v)\rightarrow 0$ as $n\rightarrow \infty$. Taking into account   \eqref{eq:3.18} and  \eqref{eq:3.19}, we can see that, on a subsequence,
\begin{equation}\label{eq:3.28}
n(z_n-u_*)=n \big(t_n-1\big)u_*+v+ns_n\rightarrow T u_* +v+S~~\mbox{as}~n\rightarrow \infty~\mbox{in}~W.
\end{equation}
It follows from \eqref{eq:3.26} and \eqref{eq:3.28} that
\begin{equation}\label{eq:3.29}
0\leq \langle \mathcal{J}_\lambda^{\prime}(u_*), v+S+Tu_*\rangle.
\end{equation}
Since  $u_*\in \mathcal{N}_\lambda,$ we obtain that $\langle \mathcal{J}_\lambda^{\prime}(u_*), u_*\rangle=0,~\langle \mathcal{J}_\lambda^{\prime}(u_*), S\rangle=0,$ hence \eqref{eq:3.29} implies
\[
0\leq \langle J_\mu^{\prime}(u_*), v\rangle.
\]
A similar reasoning with $-v$ instead of $v$ shows that the converse inequality holds, hence
$0=\langle \mathcal{J}_\lambda^{\prime}(u_*), v\rangle$.
Finally, using the density of Lipschitz functions in $W$ we obtain that $\mathcal{J}_\lambda^{\prime}(u_*)=0,$ which concludes the proof.
\end{proof}
 Therefore, as it has already been pointed out, $\lambda=0$  is an eigenvalue, so the conclusion of Theorem~\ref{teorema1} follows from Lemma~\ref{lemma3.2} and Lemma~\ref{lemma3.4}.

\begin{remark}
Thus, if $q>1$ and $1<p<q$ then $\lambda_1 = \lambda_{1q}$, so the eigenvalue set of problem \eqref{eq:1.1} is $\{ 0\}\cup (\lambda_{1q}, \infty)$,
which is independent of $p$. If $1<q<p$ then $\lambda_1\ge \lambda_{1q}$.
\end{remark}


\begin{thebibliography}{}


\bibitem{AM} Abreu, J.,  Madeira, G., Generalized eigenvalues of the $(P, 2)-$Laplacian under a parametric boundary condition, Proc. Edinburgh Math. Soc., \textbf{63}(1) (2020), 287-303.

\bibitem{A}  Anderson D., Jancel R., Wilhelmsson, H., Phys. Rev. A 30 (1984), \textbf{2}, 965–
966.

\bibitem{Ar} Aris, R., Mathematical modelling techniques, Research Notes in Mathematics,
\textbf{24}, Pitman (Advanced Publishing Program), Boston, Mass.-London, 1979.

\bibitem{BM} Barbu, L., Moro\c{s}anu, G., Eigenvalues of the negative (p,q)- Laplacian under a Steklov-like boundary condition, Complex Var. Elliptic Equations, \textbf{64}(4) (2019), 685–700.

\bibitem{BF}  Brasco, L., Franzina, G. An anisotropic eigenvalue problem of Stekloff type and weighted Wulff inequalities, Nonlinear Differ. Equ. Appl. \textbf{20} (2013), 1795-1830.

\bibitem{Br} Brezis, H., {Functional Analysis, Sobolev Spaces and Partial Differential
Equations}, Springer, 2011.

\bibitem{CF} Casas, E., Fern\'{a}ndez, L.A.,  {A Green's formula for quasilinear elliptic operators}, {J. Math. Anal. Appl.}, \textbf{142}(1989), 62-73.


\bibitem{FMS} F\u{a}rc\u{a}\c{s}eanu, M., Mih\u{a}ilescu M., Stancu-Dumitru, D., {On the set of eigen-
values of some PDEs with homogeneous Neumann boundary condition}, {Nonlinear Anal. Theory Methods Appl.},  \textbf{116} (2015), 19-25.

\bibitem{F} Fife, P.C., Mathematical aspects of reacting and diffusing systems, Lecture
Notes in Biomathematics, 28, Springer-Verlag, Berlin-New York, 1979.

%\bibitem{Le} L\^{e}, A., Eigenvalue problems for p-Laplacian,
%\emph{Nonlinear Analysis}, 64,1057-1099, 2006.

\bibitem{Fo} Folland, G.B., {Real Analysis: Modern Techniques and Their Applications (2nd ed.)}, Pure and Applied Mathematics,  John Wiley $\&$ Sons, Inc., New York, 1999.

\bibitem{GP} Gasinski, L., Papageorgiou, N.S., Nonlinear Analysis, Series in Mathematical Analysis and Applications, 9, Chapman \& Hall/CRC, Boca Raton, FL, 2006.


\bibitem{G} Giga, Y., {Surface Evolution Equations. A Level Set Approach}, Birkh\"{a}user Verlag:, Basel, 2006.


\bibitem{MMih} Mih\u{a}ilescu, M., {An eigenvalue problem possesing a continuous family of eigenvalues plus an isolated eigenvale}, {Commun. Pure
Appl. Anal.} \textbf{10} (2011), 701-708.

\bibitem{MM} Mih\u{a}ilescu,  M., Moro\c{s}anu, G., {Eigenvalues of $-\triangle_p-\triangle_q$ under Neumann
boundary condition}, {Canadian Math. Bull.}, 59(3) (2016), 606-616.

\bibitem{Mu} Murray, J.D., Mathematical biology, Biomathematics, \textbf{19,} Springer-Verlag, Berlin, 1993.

\bibitem{My} Myers-Beaghton, A.K., Vvedensky, D. D., Chapman-Kolmogorov equation for
Markov models of epitaxial growth. J. Phys. A, 22(11) (1989), 467 - 475.

\bibitem{St}  Struwe, M., {Variational Methods: Applications to Nonlinear Partial Differential Equations and Hamiltonian Systems}, Springer, 1996.

\bibitem{SW} Szulkin, A., Weth, T., {The Method of Nehari Manifold, Handbook of
Nonconvex Analysis and Applications}, Int. Press, Somerville, MA, 597-632, 2010.

\bibitem{W} Wilhelmsson, H., Explosive instabilities of reaction-diffusion equations, Phys.
Rev. A (3) 36 (1987), no. 2, 965–966.
%\bibitem{zal} C. Z\u{a}linescu, Mathematical Programming in Infinite Dimensional Normed Spaces (in Romanian), Editura Academiei, Bucharest, 1998.

\end{thebibliography}
\end{document}